\documentclass[11pt,letterpaper]{amsart}
\usepackage{cases}
\usepackage{mathrsfs}
\usepackage{color}
\usepackage{latexsym,amssymb}
\usepackage{a4wide}
\usepackage{amsthm}
\usepackage{amsmath}
\usepackage{graphicx}
\input xy
\xyoption{all}
\usepackage{verbatim}
\usepackage[lite,abbrev,msc-links]{amsrefs}
\usepackage{framed}
\usepackage{bm}

\numberwithin{equation}{section}
\begin{document}
	\theoremstyle{plain}
	\newtheorem{thm}{Theorem}[section]
	\newtheorem{lem}[thm]{Lemma}
	\newtheorem{cor}[thm]{Corollary}
	\newtheorem{cor*}[thm]{Corollary*}
	\newtheorem{prop}[thm]{Proposition}
	\newtheorem{prop*}[thm]{Proposition*}
	\newtheorem{conj}[thm]{Conjecture}
	\theoremstyle{definition}
	\newtheorem{construction}{Construction}
	\newtheorem{notations}[thm]{Notations}
	\newtheorem{question}[thm]{Question}
	\newtheorem{prob}[thm]{Problem}
	\newtheorem{rmk}[thm]{Remark}
	\newtheorem{remarks}[thm]{Remarks}
	\newtheorem{defn}[thm]{Definition}
	\newtheorem{claim}[thm]{Claim}
	\newtheorem{assumption}[thm]{Assumption}
	\newtheorem{assumptions}[thm]{Assumptions}
	\newtheorem{properties}[thm]{Properties}
	\newtheorem{exmp}[thm]{Example}
	\newtheorem{comments}[thm]{Comments}
	\newtheorem{blank}[thm]{}
	\newtheorem{observation}[thm]{Observation}
	\newtheorem{defn-thm}[thm]{Definition-Theorem}
	\newtheorem*{Setting}{Setting}

	\newcommand{\sA}{\mathcal{A}}	\newcommand{\sB}{\mathcal{B}}	\newcommand{\sC}{\mathscr{C}}	\newcommand{\sD}{\mathscr{D}}	\newcommand{\sE}{\mathscr{E}}	\newcommand{\sF}{\mathscr{F}}	\newcommand{\sG}{\mathscr{G}}	\newcommand{\sH}{\mathscr{H}}	\newcommand{\sI}{\mathscr{I}}	\newcommand{\sJ}{\mathscr{J}}	\newcommand{\sK}{\mathscr{K}}	\newcommand{\sL}{\mathscr{L}}	\newcommand{\sM}{\mathscr{M}}	\newcommand{\sN}{\mathscr{N}}	\newcommand{\sO}{\mathcal{O}}	\newcommand{\sP}{\mathscr{P}}
	\newcommand{\sQ}{\mathscr{Q}}	\newcommand{\sR}{\mathscr{R}}	\newcommand{\sS}{\mathcal{S}}	\newcommand{\sT}{\mathscr{T}}	\newcommand{\sU}{\mathscr{U}}	\newcommand{\sV}{\mathscr{V}}	\newcommand{\sW}{\mathscr{W}}	\newcommand{\sX}{\mathcal{X}}	\newcommand{\sY}{\mathcal{Y}}	\newcommand{\sZ}{\mathcal{Z}}	\newcommand{\bZ}{\mathbb{Z}}	\newcommand{\bN}{\mathbb{N}}	\newcommand{\bQ}{\mathbb{Q}}	\newcommand{\bC}{\mathbb{C}}
	\newcommand{\bR}{\mathbb{R}}	\newcommand{\bH}{\mathbb{H}}	\newcommand{\bD}{\mathbb{D}}	\newcommand{\bE}{\mathbb{E}}	\newcommand{\bP}{\mathbb{P}}
	\newcommand{\bV}{\mathbb{V}}	\newcommand{\cV}{\mathcal{V}}	\newcommand{\cF}{\mathcal{F}}	\newcommand{\bfM}{\mathbf{M}}	\newcommand{\bfN}{\mathbf{N}}	\newcommand{\bfX}{\mathbf{X}}	\newcommand{\bfY}{\mathbf{Y}}	\newcommand{\spec}{\textrm{Spec}}	\newcommand{\dbar}{\bar{\partial}}	\newcommand{\ddbar}{\partial\bar{\partial}}	\newcommand{\redref}{{\color{red}ref}}
	
	
	\title[Arakelov inequality for families of pairs.] {Arakelov inequality for families of pairs.}
	
	\author[Junchao Shentu]{Junchao Shentu}
	\email{stjc@ustc.edu.cn}
	\address{School of Mathematical Sciences,
		University of Science and Technology of China, Hefei, 230026, China}

	\begin{abstract}
		We establish an Arakelov-type inequality for a morphism $f \colon (X,\Delta) \to S$, where $(X,\Delta)$ is a simple normal crossing semi-log canonical pair and $S$ is a smooth projective variety. As a consequence, we derive a bound on the Iitaka volumes of algebraic fiber spaces whose geometric generic fiber admits a good minimal model.
	\end{abstract}
	
	\maketitle
	\section{Introduction}
	The Arakelov inequality, first established by Arakelov \cite{Arakelov1971}, is a fundamental inequality governing numerical invariants of families of curves and plays a crucial role in the proof of the geometric Shafarevich conjecture-namely, the finiteness of admissible families of curves.
	This paper establishes a higher-dimensional Arakelov-type inequality over the complex numbers. The main result is stated as follows.
	\begin{thm}\label{thm_Arakelov_ineq}
		Let $f \colon (X,\Delta) \to S$ be a morphism from a projective simple normal crossing semi-log canonical (slc) pair $(X,\Delta)$ to a smooth projective variety $S$ of dimension $d$, with relative dimension $n = \dim X - \dim S$. Let $D_f \subset S$ be a proper algebraic subset such that $f$ restricts to a simple normal crossing family over $S \setminus D_f$, and let $D \subset S$ denote the codimension-one part of $D_f$. Denote by $R_f \subset D$ the ramification divisor of $f \colon (X,\Delta) \to S$.
		
		Let $$W \subset \left(f_\ast \mathcal{O}_X(kK_{X/S} + \lfloor k\Delta \rfloor)\right)^{\otimes r}$$ be a coherent subsheaf of rank $l$, for integers $k,r \geq 1$. If $K_S + D$ is pseudo-effective, then for every movable curve class $\alpha \in N_1(S)$ and every integer $m > 0$, one has  
		\begin{align}\label{align_mainthm_ineq1}
			c_1(W) \cdot \alpha \leq \frac{klrn}{2} \, d^{mklrn - 1} \, (K_S + D) \cdot \alpha + \frac{2}{m} \, D \cdot \alpha + r \, \deg\lceil k R_f \rceil \cdot \alpha.
		\end{align}
		Moreover, the following refined estimates hold:  
		\begin{itemize}  
			\item If $\dim S = 1$, then  
			\begin{align}\label{align_mainthm_ineq2}  
				\deg W \leq \frac{k l r n}{2} \, \deg(K_S + D) + r \, \deg \lceil k R_f \rceil.  
			\end{align}  
			\item If $(S,D_f)$ is log smooth (so that $D = D_f$) and $K_S + D$ is ample, then  
			\begin{align*}
				c_1(W) \cdot (K_S + D)^{d-1} \leq \frac{k l r n}{d} \, (K_S + D)^d + r \, \lceil k R_f \rceil \cdot (K_S + D)^{d-1}.  
			\end{align*}  
		\end{itemize}
	\end{thm}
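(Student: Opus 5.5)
We would follow the Viehweg–Zuo strategy, combining Hodge theory with Higgs sheaves, and carry it out in four steps.

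\textbf{Step 1: reduction to a Hodge bundle.} We first reduce to the case $k=r=1$ and $\Delta$ reduced, using the standard cyclic cover trick. Choose a local section $s$ of $W$ generating a sufficiently positive twist, and take the $k$-th root cover attached to $kK_{X/S}+\lfloor k\Delta\rfloor$. Passing to the $r$-fold fibre product $X^{(r)}$ over $S$ and a log resolution of the slc pair (kept simple normal crossing away from $D_f$), the sheaf $W$ embeds into $f^{(r)}_\ast\omega_{X^{(r)}/S}$ of a new family $g\colon Y\to S$. The price is a correction supported on $R_f$, which accounts for the term $r\deg\lceil kR_f\rceil$. The new family has relative dimension $N\le krn$, and the rank-$l$ piece sits in the lowest Hodge piece $E^{N,0}$ of the logarithmic Hodge bundle of the variation of Hodge structure $R^N g_\ast\mathbb{C}$ over $S\setminus D_f$. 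Concretely, we get
\[
W\otimes\mathcal O(-r\lceil kR_f\rceil)\hookrightarrow \text{(a sub-line-bundle data of) } E^{N,0}.
\]

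\textbf{Step 2: Higgs-sheaf bound in the log smooth case.} Assume first that $(S,D)$ is log smooth. Take the Deligne extension and the logarithmic Higgs field $\theta\colon E^{p,q}\to E^{p-1,q+1}\otimes\Omega_S(\log D)$. Iterate $\theta$ starting from $W$ to produce the Higgs subsheaf generated by $W$. The iterated images land in $W\otimes \mathrm{Sym}^i\Omega_S(\log D)$ modulo kernels, and at most $N+1$ steps occur. Griffiths negativity of kernels of $\theta$ (Zuo, Brunebarbe) says these kernels are weakly negative. Combining this with semistability of $\Omega_S(\log D)$ with respect to a movable class yields a bound of the form
\[
c_1(W)\cdot\alpha\le \tfrac{lN}{2}\,\mu_{\max}\cdot\alpha .
\]
The factor $\tfrac{N}{2}$ comes from the symmetric placement of Hodge numbers. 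When $\dim S=1$ this gives inequality \eqref{align_mainthm_ineq2} directly. When $K_S+D$ is ample, the Kähler–Einstein (Aubin–Yau) metric makes $\Omega_S(\log D)$ semistable with slope $(K_S+D)^d/d$, which gives the refined third bound.

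\textbf{Step 3: general movable class and non-log-smooth $D_f$.} For a general movable class $\alpha$ and non-log-smooth $D_f$, we pass to a log resolution $\pi\colon S'\to S$ of $(S,D_f)$. The maximal slope of $\mathrm{Sym}^i\Omega_{S'}(\log D')$ with respect to $\pi^\ast\alpha$ is controlled by the Harder–Narasimhan filtration. This filtration has at most $d$ pieces, and iterating through all $\le mklrn$ symmetric powers explains the crude constant $d^{mklrn-1}$. The extra boundary term $\tfrac{2}{m}D\cdot\alpha$ arises from replacing $D$ by $D$ plus exceptional components with weights. We would handle this via a ramified cover of degree $m$ along $D$, so that exceptional discrepancies contribute at most $\tfrac{1}{m}$ each. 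Pseudo-effectivity of $K_S+D$ is used to ensure that the destabilizing quotients have slope bounded by $(K_S+D)\cdot\alpha$, via Campana–Păun.

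\textbf{Main obstacle.} We expect the main obstacle to be Step 3. The difficulty is controlling the maximal slope of symmetric powers of the log cotangent sheaf for arbitrary movable classes when $(S,D)$ is not log smooth, and tracking the exceptional divisors precisely enough to obtain the stated constants. We would try the Campana–Păun positivity of quotients of $\Omega_S(\log D)$ first.
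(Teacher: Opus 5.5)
\textbf{There is a genuine gap in Step~1, and that step carries essentially all of the paper's work.}

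A cyclic-cover construction needs a \emph{line} subsheaf $L$ with a nonzero map $L^{\otimes k}\to f_\ast\mathcal{O}_X(kK_{X/S}+k\Delta)$, i.e.\ a global section of $\mathcal{O}_X(kK_{X/S}+k\Delta-kf^\ast L)$. A ``local section of $W$'' gives no global cover. The output of such a construction is a sub-line-bundle of the lowest Hodge piece of the new family, not an embedding of the rank-$l$ sheaf $W$. Moreover, cyclic covers do not change relative dimension, so your $r$-fold fibre product has $N=rn$, and the factor $k$ in $klrn$ has no source.

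This is not mere bookkeeping. An embedding $W\otimes\mathcal{O}_S(-r\lceil kR_f\rceil)\hookrightarrow E^{rn,0}$ together with your Step~2 would give $\deg W\le \frac{lrn}{2}\deg(K_S+D)+r\deg\lceil kR_f\rceil$ over curves. That bound fails for $W=f_\ast\omega_{X/S}^{\otimes 2}$ (so $l=3$) of a semistable genus-$2$ family with $\deg f_\ast\omega_{X/S}>\frac12\deg(K_S+D)$, for example over a Teichm\"uller curve. Indeed, Mumford's formula and Xiao's slope inequality give $\deg f_\ast\omega_{X/S}^{\otimes 2}=\deg f_\ast\omega_{X/S}+\omega_{X/S}^2\ge 3\deg f_\ast\omega_{X/S}$.

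Finally, for non-semistable $f$ the tensor power $(f_\ast\mathcal{O}_X(kK_{X/S}+\lfloor k\Delta\rfloor))^{\otimes r}$ is not the pushforward from the fibre product. So your claim that ``the price is a correction supported on $R_f$'' is asserted without any mechanism.

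\textbf{What is missing is passing to the determinant before any Hodge theory.} The paper uses $(\det W)^{\otimes mk}\hookrightarrow\bigotimes^{mkl}W\hookrightarrow\big(f_\ast\mathcal{O}_X(kK_{X/S}+\lfloor k\Delta-kf^\ast R_f\rfloor)\big)^{\otimes mklr}$; this is exactly where $kl$ enters the relative dimension. The argument then proceeds as follows.
\begin{itemize}
\item Realizing this tensor power inside a pushforward from the $mklr$-fold fibre product needs strict semistability (Lemma~\ref{lem_mild_pushforward}). So the paper passes to a strictly semistable reduction in codimension one, $\sigma\colon\widetilde S\to S$.
\item The comparison of pluricanonical forms under $\sigma$ (Lemma~\ref{lem_functorial_pushforward_pluricanonical}) is what produces the twist by $r\lceil kR_f\rceil$.
\item Viehweg's base-change inclusion plus the trace of $\sigma$ bring everything back to $S$, at the cost of a twist by $kD$.
\item Only then is Hodge theory used, as a black box. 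The rank-one inequality of Theorem~\ref{thm_VZ_Arakelov} (from \cite{stjc2026}) is applied to $L=(\det W)^{\otimes m}\otimes\mathcal{O}_S(-D)$, up to an ideal sheaf supported in codimension two, on a family of relative dimension $N=mklrn$. This gives $(mc_1(W)-2D)\cdot\alpha\le\frac{1}{N+1}\sum_{p=1}^{N}p\,d^{p-1}(K_S+D)\cdot\alpha$.
\end{itemize}

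This also explains the constants, and your Step~3 account of them does not match. The term $\frac{2}{m}D\cdot\alpha$ is the twist by $\mathcal{O}_S(-D)$ plus the $-D_f$ already present in Theorem~\ref{thm_VZ_Arakelov}, divided by the auxiliary exponent $m$; it does not come from discrepancies of a degree-$m$ cover. The factor $d^{mklrn-1}$ is just the crude estimate $\sum_{p\le N}p\,d^{p-1}\le d^{N-1}\frac{N(N+1)}{2}$. For $d=1$ the sum is exact, giving $\frac{klrn}{2}$ after letting $m\to\infty$, and the ample case is handled the same way.

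Your Steps~2--3 try to reprove that black box. Semistability of $\Omega_S(\log D)$ with respect to an arbitrary movable class is not available in general, so Step~2 does not go through as written for general $\alpha$. The non-log-smooth case is not the real obstacle: the paper simply blows up $S$ at the outset to make $D_f$ simple normal crossing.
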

    The assumption that $(X,\Delta)$ is a simple normal crossing pair-rather than a log smooth pair-ensures the broad applicability of Theorem \ref{thm_Arakelov_ineq} to families arising at the boundary of moduli spaces of varieties (c.f. \cite{Kollar1988,Kollar2010,Kollar2023,Birkar2022}).  
    Note that the ramification divisor $R_f$, as defined in Definition \ref{defn_ramified_divisor}, vanishes identically when the morphism $f \colon (X,\Delta) \to S$ is strictly semistable in codimension one (Definition \ref{defn_semistable_cod1}) .  
    The inequality (\ref{align_mainthm_ineq1}) is not sharp. A refined version is established in Theorem \ref{thm_Arakelov_family} and Corollary \ref{cor_Arakelov_ineq}.  Moreover, (\ref{align_mainthm_ineq2}) is sharp: equality holds for certain Shimura families (see \cite{VZ2004}, \cite{MVZ2006}). 
    
    As a consequence, we obtain the following upper bound for the Iitaka volume (defined in (\ref{align_Iitaka_volume})), adapting the argument of Lu-Tan-Zuo \cite{LTZ2017}.   
    
    \begin{thm}  
    	Let $f \colon X \to S$ be a morphism from a smooth projective variety $X$ to a smooth projective curve $S$, whose geometric generic fiber $F$ has dimension $n$ and admits a good minimal model. Let $D \subset S$ be a reduced effective divisor such that $f$ restricts to a smooth family over $S \setminus D$, and let $R_f \subset D$ denote the ramification divisor of $f$. Let $L$ be a nef line bundle on $S$. If $K_S + D$ is nef, then  
    	\begin{align*}  
    		\mathrm{Ivol}\big(\omega_{X/S} \otimes f^\ast L\big)  
    		\leq \mathrm{Ivol}(\omega_F) \cdot \big(\kappa(\omega_F) + 1\big) \cdot \left( \frac{n}{2} \deg(K_S + D) + \deg L \right)  
    		+ \limsup_{k \to \infty} \frac{\big(\kappa(\omega_F) + 1\big)!}{k^{\kappa(\omega_F) + 1}} \deg \lceil k R_f \rceil.  
    	\end{align*}  
    \end{thm}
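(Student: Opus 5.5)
The plan is to deduce the Iitaka-volume bound from the curve case (\ref{align_mainthm_ineq2}) of Theorem \ref{thm_Arakelov_ineq}, following the strategy of Lu--Tan--Zuo \cite{LTZ2017}. Write $\kappa=\kappa(\omega_F)$. Since $F$ has a good minimal model, the relative canonical ring $\bigoplus_k f_\ast\omega_{X/S}^{\otimes k}$ is finitely generated over the generic point. Moreover $h^0(F,kK_F)=\mathrm{Ivol}(\omega_F)\,k^\kappa/\kappa!+O(k^{\kappa-1})$ for $k$ divisible enough. Set $\sE_k=f_\ast\omega_{X/S}^{\otimes k}$; this is a vector bundle on $S$ of rank $r_k=h^0(F,kK_F)$. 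The first step is the estimate
\begin{align*}
h^0\big(X,k(K_{X/S}+f^\ast L)\big)=h^0(S,\sE_k\otimes L^{k})\leq \deg(\sE_k\otimes L^k)+r_k\big(1-g(S)\big)+(\text{correction}),
\end{align*}
which holds up to lower-order terms. To make it precise I would apply Riemann--Roch to the Harder--Narasimhan pieces. The pieces of negative slope contribute nothing. Those of nonnegative slope $\mu$ contribute at most $\mathrm{rk}\cdot(\mu+1)$. Hence $h^0\leq \sum_{\mu_i\geq 0}\mathrm{rk}_i(\mu_i+1)\leq \mu_{\max}^+\text{-weighted degree}+r_k$.

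Rather than $\deg\sE_k$ itself, I would bound the degree of each subsheaf $W\subset \sE_k$ using (\ref{align_mainthm_ineq2}) with $r=1$ and $l=\mathrm{rk}W$. This gives
\begin{align*}
\deg W\leq \frac{k\,\mathrm{rk}(W)\,n}{2}\deg(K_S+D)+\deg\lceil kR_f\rceil .
\end{align*}
Applying this to each step of the HN filtration, and using that $L$ is nef so that twisting by $L^k$ adds $k\,\mathrm{rk}\deg L$, I obtain
\begin{align*}
h^0\big(S,\sE_k\otimes L^k\big)\leq r_k\Big(k\big(\tfrac n2\deg(K_S+D)+\deg L\big)+1\Big)+\deg\lceil kR_f\rceil .
\end{align*}
The nefness of $K_S+D$ guarantees that the subtraction involved in passing from the HN filtration to the filtered sum is harmless. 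To get exactly one copy of $\deg\lceil kR_f\rceil$ rather than one per HN step, I would take the sum of the degrees of the positive-slope part as $\deg$ of the single subsheaf $\sE_k^{\geq0}$. Then $h^0\leq \deg(\sE^{\ge0}_k\otimes L^k)+\mathrm{rk}$, and one application of the inequality suffices.

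Next I would substitute $r_k\sim \mathrm{Ivol}(\omega_F)k^\kappa/\kappa!$, divide by $k^{\kappa+1}/(\kappa+1)!$, and take $\limsup$. The Iitaka dimension of $\omega_{X/S}\otimes f^\ast L$ is at most $\kappa+1$. If it equals $\kappa+1$, the growth rate is $k^{\kappa+1}$ and we get
\begin{align*}
\mathrm{Ivol}\leq (\kappa+1)\,\mathrm{Ivol}(\omega_F)\Big(\tfrac n2\deg(K_S+D)+\deg L\Big)+\limsup_k\frac{(\kappa+1)!}{k^{\kappa+1}}\deg\lceil kR_f\rceil .
\end{align*}
If it is smaller, the left-hand side should be interpreted as in (\ref{align_Iitaka_volume}). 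Either the Iitaka volume is normalized with the actual Iitaka dimension, in which case a separate argument is needed, or it vanishes at order $\kappa+1$; I would check the definition and handle that case separately.

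I expect the main obstacle to be the HN bookkeeping. The issue is bounding $h^0$ by the degree of a single subsheaf while correctly controlling the $+\mathrm{rk}$ and genus terms, since these are $O(k^\kappa)$ and vanish in the limit. A further concern is that $X$ is only smooth rather than snc over $D$. To handle this, I would pass to a semistable reduction or log resolution compatible with Definition \ref{defn_ramified_divisor}, so that $(X,0)\to S$ satisfies the hypotheses of Theorem \ref{thm_Arakelov_ineq} with $\Delta=0$.
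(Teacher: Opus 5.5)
Your central estimate is correct, and it reaches the paper's bound by a different route. The paper uses Kawamata's theorem to get ampleness of $f_\ast\omega_{X/S}^{\otimes k}$ and finite generation of the canonical ring to deduce $H^1(S,f_\ast\omega_{X/S}^{\otimes k}\otimes L^{\otimes k})=0$. It then writes $h^0$ exactly by Riemann--Roch and applies (\ref{align_Arakelov_cor_2}) to $W=f_\ast\omega_{X/S}^{\otimes k}$ itself. Your Harder--Narasimhan argument, in its final form, takes a single subsheaf $W\subset f_\ast\omega_{X/S}^{\otimes k}$ with $W\otimes L^{\otimes k}$ equal to the nonnegative-slope part. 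It uses only three inputs: $h^0(G)\le \deg G+\mathrm{rk}\,G$ for semistable $G$ of slope $\ge 0$; one application of (\ref{align_mainthm_ineq2}) with $r=1$; and $\mathrm{rk}\,W\le r_k$ together with $\tfrac n2\deg(K_S+D)+\deg L\ge 0$. This gives, for every $k$,
\begin{align*}
h^0\big(X,\omega_{X/S}^{\otimes k}\otimes f^\ast L^{\otimes k}\big)\leq r_k\Big(k\big(\tfrac n2\deg(K_S+D)+\deg L\big)+1\Big)+\deg\lceil kR_f\rceil .
\end{align*}
It needs no vanishing theorem and no ampleness of $f_\ast\omega_{X/S}^{\otimes k}$. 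Your extra $+r_k$, compared with the paper's $+(1-g)r_k$, is $O(k^{\kappa})$ and disappears in the limit. Your worry about $X$ being only smooth is unnecessary. A smooth $X$ with $\Delta=0$ is already an snc slc pair and $D_f=D$ works, so the inequality applies directly, as in the paper; blowing up $X$ over $D$ would only risk enlarging $R_f$.

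The gap is the last step, which you leave open. With the definition (\ref{align_Iitaka_volume}), $\mathrm{Ivol}$ is normalized by the actual Iitaka dimension. Your order-$k^{\kappa+1}$ bound therefore proves the theorem only once you know $\kappa(\omega_{X/S}\otimes f^\ast L)=\kappa(\omega_F)+1$. If that dimension were smaller, a bound on $h^0/k^{\kappa+1}$ says nothing about the correctly normalized limit. The paper supplies this input: it reduces to non-isotrivial $f$ and then gets $\kappa(\omega_{X/S}\otimes f^\ast L)=\kappa(\omega_F)+1$ from \cite[Theorem 1.1]{Kawamata1985} and \cite[Lemma 2.3.31]{Fujino2020}. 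You need to add exactly this; in your route it is the only place the good-minimal-model hypothesis is really used.

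For the remaining case your hesitation is justified, and no separate argument can work. The paper calls the isotrivial case trivial, but take $X=F\times E$ with $E$ an elliptic curve, $D=0$ and $L=\sO_E$. Then $\omega_{X/S}=\mathrm{pr}_F^\ast\omega_F$, so $\kappa(\omega_{X/S})=\kappa(\omega_F)$, and the left side equals $\mathrm{Ivol}(\omega_F)>0$ while the right side is $0$. So the statement must be read either for non-isotrivial $f$, or with $\mathrm{Ivol}$ normalized at order $\kappa(\omega_F)+1$. In the second reading your argument already covers every case.
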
      
    In particular, if $\kappa(\omega_F) > 1$ or $R_f = 0$ (i.e. when $f$ is semistable) then  
    \begin{equation*}  
    	\mathrm{Ivol}\big(\omega_{X/S} \otimes f^\ast L\big)  
    	\leq \mathrm{Ivol}(\omega_F) \cdot \big(\kappa(\omega_F) + 1\big) \cdot \left( \frac{n}{2} \deg(K_S + D) + \deg L \right).  
    \end{equation*}  
    
    A substantial body of work has addressed the Arakelov inequality over one-dimensional bases, including \cite{Deligne1987,Kovacs1996,Kovacs1997,Kovacs2000,Peters2000,Viehweg2000,Zuo2017}. For a comprehensive historical overview of this development, we refer the reader to \cite{Viehweg2009}.  
    
    When $\dim S = 1$, $\Delta = 0$, $r = 1$, and $X$ is smooth, inequality (\ref{align_mainthm_ineq2}) gives an effective form of the Arakelov-type inequality established by Viehweg-Zuo \cite{VZ2001}. In the same setting-with the additional assumption that $f$ is strictly semistable-the inequality was independently obtained by Viehweg-Zuo \cite{VZ2006} and M\"oller-Viehweg-Zuo \cite{MVZ2006}. Furthermore, Lu-Yang-Zuo \cite{LYZ2022} showed that (\ref{align_mainthm_ineq2}) is strict for all integers $k$ such that the $k$-th pluricanonical linear system of the general fiber induces a birational map, provided $r = 1$, $\Delta = 0$, and $f$ is a non-isotrivial semistable family of projective manifolds of general type over a curve. Finally, Theorem \ref{thm_Arakelov_ineq} provides an effective version of an earlier work of Kov\'acs-Taji \cite{Kovacs2024}.

    This paper builds upon the results of \cite{stjc2026}, in which an Arakelov inequality was established to confirm the geometric Shafarevich boundedness conjecture for both the moduli space of Koll\'ar-Shepherd-Barron (KSB) pairs and that of stable minimal models. The central technical foundation of the present work is the Hodge theoretic Arakelov inequality established in \cite[Theorems 2.10 and 2.13]{stjc2026}.
    
    \subsection{Comparing to recent work of Kov\'acs-Taji \cite{Kovacs2024}}
    In \cite{Kovacs2024}, Kov\'acs and Taji established a higher dimensional Arakelove inequality when the fibers are canonically polarized manifolds. Theorem \ref{thm_Arakelov_ineq} provides an effective version of Kov\'acs-Taji's inequality. To illustrate the difference between the techniques of the present paper and \cite{Kovacs2024}, assume for simplicity that $f^o:X^o\to S^o$ is a family of canonically polarized manifolds admitting an extension to a family $f:X\to S$ of KSB-stable varieties. In \cite{Kovacs2024}, the authors construct an embedding  
    \begin{align}\label{align_intro_embed}
    	\xi^\ast\lambda_{0,r}^{\otimes r}\simeq \det f_\ast\big(\mathcal{O}_X(rK_{X/S})\big)^{\otimes r} \hookrightarrow f^{[\mu]}_{\ast}\big(\mathcal{O}_{X^{[\mu]}}(rK_{X^{[\mu]}/S})\big),
    \end{align}  
    valid for sufficiently large $\mu$, where $\mu$ depends only on the some discrete invariants. Here, $f^{[\mu]}:X^{[\mu]}\to S$ denotes the $\mu$-fold fiber product of $f$ over $S$. The embedding gives rise to an embedding of $\det f_\ast\big(\mathcal{O}_X(rK_{X/S})\big)^{\otimes r}$ into a variation of Hodge structures. Then the authors apply the positivity of the logarithmic cotangent bundle $\Omega_S(\log D)$ (\cite{CP2019}) to derive a higher-dimensional Arakelov-type inequality.
    
    In contrast to \cite{Kovacs2024}, the central technical innovation of this paper consists in replacing the embedding (\ref{align_intro_embed}) with one arising from the alternating sum construction:  
    \[
    \det f_\ast\big(\mathcal{O}_X(rK_{X/S})\big)^{\otimes r} \hookrightarrow \bigotimes^{lr} f_\ast\big(\mathcal{O}_X(rK_{X/S})\big) \simeq f^{[lr]}_{\ast}\big(\mathcal{O}_{X^{[lr]}}(rK_{X/S})\big),
    \]  
    where $l = \operatorname{rank} f_\ast\big(\mathcal{O}_X(rK_{X/S})\big)$. By the geometric characterization of the canonical extension of admissible variations of mixed Hodge structure of geometric origin (\cite{stjc2025}), the embeding give rise to an embedding of $\det f_\ast\big(\mathcal{O}_X(rK_{X/S})\big)^{\otimes r}$ into the canonical extension of a variation of Hodge structure on $S^o$. Since $lr<\mu$ in general, this construction leads to a quantitatively sharper Arakelov-type inequality. Furthermore, in addition to the positivity of $\Omega_S(\log D)$, we exploit the parabolic semistability of the Deligne extension of the variation of Hodge structure on $S^o$ (\cite{Simpson1990}) to establish the optimal Arakelov inequality (\ref{align_mainthm_ineq2}).
    
	\textbf{Notation and conventions.}
	\begin{itemize}
		\item Let $F$ be a torsion-free coherent sheaf on a smooth algebraic variety $X$. Define its dual by $F^\vee := \mathcal{H}om_{\mathcal{O}_X}(F, \mathcal{O}_X)$; then $F^{\vee\vee}$ is the reflexive hull of $F$. The determinant line bundle $\det(F)$ is defined as the reflexive hull of $\bigwedge^{\operatorname{rank} F} F$.
	\end{itemize}


	\section{Arakelov Inequality}
	\subsection{Ramified divisor}\label{section_ramified_divisor}
	\begin{defn}[Simple normal crossing family]\label{defn_SNC_family}
		Let $S$ and $X$ be reduced schemes of finite type over $\operatorname{Spec}(\mathbb{C})$, and let $D \geq 0$ be a Weil $\bQ$-divisor on $X$. Let $f: X \to S$ be a morphism. The morphism $f: (X, D) \to S$ is said to be \emph{simple normal crossing over $S$} at a point $x \in X$ if there exists a Zariski open neighborhood $U$ of $x$ in $X$ that can be embedded into a scheme $Y$, which is smooth over $S$. In this embedding, $Y$ admits a regular system of parameters $(z_1, \dots, z_p, y_1, \dots, y_r)$ over $S$ at the point corresponding to $x = 0$, such that $U$ is defined by the monomial equation $z_1 \cdots z_p = 0$ and  
		\[ D|_U = \sum_{i=1}^r a_i (y_i = 0)|_U, \quad \text{where } a_i \geq 0, \]  
		over $S$.
		
		The morphism $f: (X, D) \to S$ is said to be a \emph{simple normal crossing family over $S$} if it satisfies the simple normal crossing condition over $S$ at every point of $X$. 
		
		In the special case where $S = \operatorname{Spec}(\mathbb{C})$, the pair $(X, D)$ is referred to as a \emph{simple normal crossing pair} if $(X, D)$ forms a simple normal crossing family over $\operatorname{Spec}(\mathbb{C})$. $(X, D)$ is called a \emph{simple normal crossing slc pair} if the coefficients of $\Delta$ lie in $[0,1]$. $X$ is referred to as a \emph{simple normal crossing scheme} if $(X,0)$ is a simple normal crossing pair. In this context, $X$ has Gorenstein singularities and possesses an invertible dualizing sheaf $\omega_X$. The canonical divisor $K_X$ is defined up to linear equivalence via the isomorphism $\omega_X \simeq \sO_X(K_X)$.
	\end{defn}
	\begin{defn}[Semistable morphism in codimension one]\label{defn_semistable_cod1}  
		Let $f \colon (X,\Delta) \to S$ be a morphism from a simple normal crossing scheme $X$ to a smooth variety $S$, where $\Delta \geq 0$ is an effective $\mathbb{Q}$-divisor on $X$. The morphism $f$ is called \emph{semistable} if there exists a (not necessarily connected) smooth divisor $D_f \subset S$ satisfying the following conditions:  
		\begin{enumerate}  
			\item $(X,\Delta)$ is a simple normal crossing pair.  
			\item $f \colon (X,\Delta) \to S$ is a simple normal crossing family over $S \setminus D_f$.  
			\item $f^{-1}(D_f)$ and $f^{-1}(D_f) \cup \Delta$ are simple normal crossing divisors on $X$.  
			\item For every stratum $Z$ of $(f^{-1}(D_f))_{\mathrm{red}} \cup \Delta_{\mathrm{red}}$, the restriction $f|_Z \colon Z \to D_f$ is surjective onto an irreducible component of $D_f$.  
		\end{enumerate}  
		If, in addition, $\Delta$ is horizontal (i.e., contains no component of any fiber) and $f^{-1}(D_f)$ is reduced, then $f \colon (X,\Delta) \to S$ is said to be \emph{strictly semistable}.  
		
		The morphism $f \colon (X,\Delta) \to S$ is called \emph{semistable in codimension one} (resp. \emph{strictly semistable in codimension one}) if there exists a dense Zariski open subset $U \subset S$ with $\mathrm{codim}_S(S \setminus U) \geq 2$ such that the base change $f|_{f^{-1}(U)} \colon \big(f^{-1}(U),\, \Delta|_{f^{-1}(U)}\big) \to U$ is semistable (resp. strictly semistable).  
	\end{defn}  
    Given a proper morphism $f \colon (X,\Delta) \to S$ from a simple normal crossing pair $(X,\Delta)$ to a smooth variety $S$, there exist closed algebraic subsets $Z_X \subset X$ and $Z_S \subset S$ satisfying the following conditions:  
    \begin{itemize}  
    	\item $\mathrm{codim}_X(Z_X) \geq 2$ and $\mathrm{codim}_S(Z_S) \geq 2$;  
    	\item Setting $U := X \setminus \big(f^{-1}(Z_S) \cup Z_X\big)$ and $V := S \setminus Z_S$, the induced morphism $f|_U \colon \big(U,\, \Delta|_U\big) \to V$ is semistable.  
    \end{itemize}      
    Moreover, $\Delta$ admits a canonical decomposition  
    \begin{align}\label{align_decomp_div}  
    	\Delta = \Delta^h + \Delta^v + \Delta^e,  
    \end{align}  
    where  
    \begin{itemize}  
    	\item $\Delta^h$ is the \emph{horizontal part} of $\Delta$: the sum of all components of $\Delta$ that dominate $S$;  
    	\item $\Delta^v$ is the \emph{vertical part} of $\Delta$: the sum of all components of $\Delta$ whose image in $S$ has codimension one;  
    	\item $\Delta^e$ is the \emph{exceptional part} of $\Delta$: the sum of all components of $\Delta$ whose image in $S$ has codimension at least two.  
    \end{itemize}  
    Let $D_f \subset S$ be a closed algebraic subset such that $f$ restricts to a simple normal crossing family over $S \setminus D_f$, and let $D \subset S$ denote the codimension-one part of $D_f$.      
    \begin{defn}\label{defn_ramified_divisor}  
    	The \emph{ramification divisor} associated with $f \colon (X,\Delta) \to S$ is defined as the smallest effective $\mathbb{Q}$-divisor $R_f$ on $S$ satisfying  
    	\begin{align}\label{align_ramified_div}  
    		f^{-1} R_f \geq f^\ast D - (f^\ast D)_{\mathrm{red}} + \Delta^v  
    	\end{align}  
    	over $U$ described as above. Here, $f^{-1} R_f$ denotes the closure in $X$ of the pullback $(f|_U)^\ast R_f$.  
    \end{defn}    
    The ramification divisor $R_f$ depends only on the family $f \colon (X,\Delta) \to S$ and is independent of the choice of $D_f$, $U$, or $V$. When the coefficients of $\Delta^v$ lie in $[0,1]$, one has $0 \leq R_f \leq D$. Moreover, if $f$ is strictly semistable in codimension one, then $R_f = 0$.  
    
	By applying semi-log canonical desingularization to the total space (c.f. \cite{Bierstone2013}), any family $f \colon (X,\Delta) \to S$ can be modified so that it becomes semistable in codimension one.  
	Moreover, using Kawamata’s covering construction \cite[Theorem 17]{Kawamata1981}, one may further alter the base and total space-via a finite surjective morphism $S' \to S$ and the associated pullback family-to achieve strict semistability in codimension one.  
	\begin{prop}[Strictly semistable reduction in codimension one]\label{prop_ssr_cod1}  
		Let $f \colon X \to S$ be a proper surjective morphism from a simple normal crossing scheme $X$ to a smooth variety $S$, and let $\Delta \geq 0$ be a $\mathbb{Q}$-divisor on $X$. Assume that $f \colon (X,\Delta) \to S$ is semistable in codimension one, and that there exists a simple normal crossing divisor $D_f \subset S$ such that $f$ restricts to a simple normal crossing family over $S \setminus D_f$. Then there exists a commutative diagram  
		\begin{align}\label{align_semistablereduction_cod1}  
			\xymatrix{  
				\widetilde{X} \ar@/^/[rr]^{\widetilde{\tau}} \ar[dr]_{\widetilde{f}} \ar[r]^-{\tau} & X \times_S \widetilde{S} \ar[r] \ar[d] & X \ar[d]^f \\  
				& \widetilde{S} \ar[r]^{\sigma} & S  
			}  
		\end{align}  
		satisfying the following conditions:  
		\begin{enumerate}  
			\item $\widetilde{S}$ is a smooth projective variety, $\sigma$ is a flat finite surjective morphism, and $\sigma^{-1}(D_f)$ is a reduced simple normal crossing divisor on $\widetilde{S}$;  
			\item $\tau$ is a functorial desingularization of the main component of $X \times_S \widetilde{S}$-in particular, $\tau$ is an isomorphism over the open subset $\big(\sigma^{-1}(S \setminus D_f)\big) \times_{S \setminus D_f} f^{-1}(S \setminus D_f)$;  
			\item Let $\widetilde{\Delta}$ denote the strict transform of the horizontal part of $\Delta$ under $\widetilde{\tau} \colon \widetilde{X} \to X$, i.e., $\widetilde{\tau}^\ast \Delta = \widetilde{\Delta} + E$, where $E \geq 0$ is a $\mathbb{Q}$-divisor supported entirely on $\widetilde{f}^{-1}\big(\sigma^{-1}(D_f)\big)$ and sharing no common components with $\widetilde{\Delta}$. Then $\widetilde{f} \colon (\widetilde{X}, \widetilde{\Delta}) \to \widetilde{S}$ is strictly semistable in codimension one.  
		\end{enumerate}  
	\end{prop}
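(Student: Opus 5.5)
The plan is to build $\sigma$ with Kawamata's covering construction \cite[Theorem 17]{Kawamata1981}, chosen so that the ramification of $\sigma$ along each component of $D_f$ is divisible by every multiplicity that appears in the fibers. The first step is local work over the generic point of each component $D_i$ of $D_f$. Since $f$ is semistable in codimension one, there is an open $V\subset S$ with $\mathrm{codim}(S\setminus V)\ge 2$ over which $f^{-1}(D_f)\cup\Delta$ is an snc divisor, and every stratum dominates a component of $D_f$. Near a general point of $D_i$, with local coordinate $t$ defining $D_i$, we then have $f^\ast t = u\prod_j z_j^{a_j}$. Here the $z_j$ are local equations of components of $f^{-1}(D_i)$ in the snc scheme $X$, and $u$ is a unit. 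Let $N$ be a common multiple of all the $a_j$ that occur, and of the denominators of the coefficients of $\Delta^v$. Kawamata's lemma gives a finite flat surjective $\sigma\colon\widetilde S\to S$ from a smooth projective $\widetilde S$ with $\sigma^\ast D_i = N\cdot(\text{reduced snc divisor})$ for all $i$, and with $\sigma^{-1}(D_f)$ reduced snc. The possibly extra branch divisor $H$ of $\sigma$ is chosen general, so that it is transverse to $D_f$ and $f$ is an snc family over a neighbourhood of $H$. Flatness of $\sigma$ comes from finiteness between smooth varieties (miracle flatness). This proves (1).

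Next, let $\tau$ be a functorial desingularization, in the snc sense of \cite{Bierstone2013}, of the main component of $X\times_S\widetilde S$. Over $\sigma^{-1}(S\setminus D_f)$ the fiber product is already an snc family pulled back along $\sigma$, so it is snc away from $\sigma^{-1}(D_f)$. Transversality of $H$ handles the extra branching, and functoriality then gives the isomorphism claimed in (2). Now take $\widetilde\Delta$ to be the strict transform of $\Delta^h$ and put $E:=\widetilde\tau^\ast\Delta-\widetilde\Delta$. Because $\Delta^v$ and $\Delta^e$ map into $D_f$ or into codimension $\ge 2$, and the horizontal part has no components over $\sigma^{-1}(D_f)$ after strict transform, $E$ is supported on $\widetilde f^{-1}(\sigma^{-1}(D_f))$.

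The main step, and the expected obstacle, is (3): we must check that over an open $\widetilde V\subset\widetilde S$ with complement of codimension $\ge2$, $\widetilde f$ is strictly semistable. Locally, $X\times_S\widetilde S$ is given by $s^N = u\prod z_j^{a_j}$ with $N$ divisible by every $a_j$. This is the standard toroidal computation from Kempf–Knudsen–Mumford–Saint-Donat semistable reduction in the one-parameter case. After normalization the singularities are toric, and a toroidal (functorial) resolution makes the fiber over $s=0$ reduced snc. The coefficient conditions hold because everything is étale-locally a product of a one-dimensional semistable reduction with a smooth factor, as we work over the generic point of each $\sigma^{-1}(D_i)$. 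Specifically, $\widetilde f^\ast(s)$ becomes reduced, $\widetilde\Delta$ is horizontal and meets the fibers transversally, and every stratum of $\widetilde f^{-1}(\sigma^{-1}D_f)\cup\widetilde\Delta$ dominates a component. The snc (rather than smooth) nature of $X$ is handled by working component by component: the strata of $X$ are smooth, they meet transversally, and the monomial equation just adds variables $z_j$ with $a_j$ possibly zero along a double locus.

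Finally, we remove from $\widetilde S$ the codimension $\ge2$ locus where the local product description fails. This locus consists of the preimage of $S\setminus V$, the singular points of $\sigma^{-1}(D_f)$, and the points where $H$ meets $D_f$ badly. On the remaining open set, conditions (1)–(4) of Definition \ref{defn_semistable_cod1} together with reducedness and horizontality hold. This gives strict semistability in codimension one.
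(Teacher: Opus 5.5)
Your strategy is the one the paper intends; the paper says nothing beyond citing Kawamata's covering construction. You take a Kawamata cover whose ramification index $N_i$ along each $D_i$ is divisible by the fibre multiplicities, desingularize the main component of $X\times_S\widetilde S$, and check strict semistability over the generic points of $\sigma^{-1}(D_f)$. Items (1), (2) and the description of $E$ are fine, with one slip. When $\dim S\ge 2$, $f$ cannot be an snc family over a neighbourhood of the ample divisor $H$. You only need $H\cap D_f$ to have codimension two, and you discard it later anyway. The gap is in (3), exactly at the step you call the main one. The sentence ``a toroidal (functorial) resolution makes the fibre over $s=0$ reduced snc'' conflates two different resolutions, and for neither is reducedness automatic.

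\emph{The functorial resolution required by (2) can fail.} Suppose the fibre over the generic point of $D_i$ has a component of multiplicity $2$ somewhere, and elsewhere three reduced components meeting along a stratum. Your recipe allows $N_i=2$. Along that stratum the base change is, \'etale locally and up to a smooth factor, $s^2=y_1y_2y_3$.
\begin{itemize}
\item A standard invariant-driven algorithm sees order $2$ along the whole singular locus. The coefficient ideal on the maximal-contact hypersurface $\{s=0\}$ (essentially $(y_1y_2y_3)$) has order $3$ on the triple locus but $2$ on the double loci. So the triple locus is blown up first.
\item In the chart $s=s'y_1$, $y_j=y_j'y_1$, the strict transform is $s'^2=y_1y_2'y_3'$. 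The exceptional divisor is $E=\{s'=0\}$, with $\mathrm{ord}_E(y_1)=2$ and $\mathrm{ord}_E(s)=3$.
\item $E$ survives in $\widetilde X$ with multiplicity $3$ in the fibre. Hence $\widetilde f$ is not strictly semistable even over the generic point of $\sigma^{-1}(D_i)$.
\end{itemize}

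\emph{What is needed instead is the KKMS argument proper.}
\begin{itemize}
\item Normalize each component of the base change, keeping the gluing along the double loci of $X$. The local model is then toric, with cone $\mathbb{R}^q_{\ge 0}$, lattice $\{v\in\mathbb{Z}^q:\sum_j a_jv_j\equiv 0 \bmod N\}$, and $s$ given by the form $\ell'=\sum_j a_jv_j/N$.
\item A resolution has reduced fibre only if every exceptional divisor $E$ has $\mathrm{ord}_E(s)=1$. The way to achieve this is the toroidal modification attached to a unimodular lattice triangulation of the slice $\{\ell'=1\}$, chosen compatibly on the whole complex of strata of $f^{-1}(D_i)$.
\item Divisibility of $N$ by the $a_j$ only makes the vertices $Ne_j/a_j$ of the slice lattice points. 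The existence of a suitable triangulation is the combinatorial theorem of KKMS. In general it is guaranteed only after a further dilation, i.e.\ a further divisibility condition on $N_i$ that you must build into Kawamata's cover.
\end{itemize}
The resulting $\tau$ is the toroidal blow-up over the codimension-one locus, globalized and then resolved away from it. It is still an isomorphism over $\sigma^{-1}(S\setminus D_f)$, but it is not a functorial desingularization. So (3) cannot be derived from functoriality, and your proof must construct this modification explicitly.
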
  
	
	\begin{lem}\label{lem_functorial_pushforward_pluricanonical}  
		Let the notation be as in Proposition \ref{prop_ssr_cod1}. Suppose further that $f \colon (X,\Delta) \to S$ is semistable and $\widetilde{f} \colon (\widetilde{X},\widetilde{\Delta}) \to \widetilde{S}$ is strictly semistable. Let $R_f$ denote the ramification divisor associated with $f$, and let $k > 0$ be an integer. Then pullback of pluricanonical forms induces an injective morphism  
		\begin{equation*}  
			\sigma^\ast f_\ast \sO_X\big(kK_{X/S} + \lfloor k\Delta - k f^\ast R_f \rfloor\big)  
			\hookrightarrow \widetilde{f}_\ast \sO_{\widetilde{X}}\big(kK_{\widetilde{X}/\widetilde{S}} + k\widetilde{\Delta}\big).  
		\end{equation*}  
	\end{lem}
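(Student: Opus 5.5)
The plan is to prove the injection by reducing to a local comparison of divisors on $\widetilde X$. Both sides are torsion-free; the right-hand side is a pushforward of a line bundle from an snc scheme, hence reflexive in codimension one in the relevant sense. It therefore suffices to construct a natural map on a big open subset of $\widetilde S$ and check that it is injective there. Injectivity follows because over $\sigma^{-1}(S\setminus D_f)$ the map $\widetilde\tau$ is an isomorphism onto the base change, so the map is generically an isomorphism of the underlying pluricanonical forms and its kernel is a torsion subsheaf of a torsion-free sheaf. We may thus shrink $S$ to the complement of a codimension-two set, where $D_f$ is a disjoint union of smooth divisors and $\sigma$ is, locally around a general point of a component $D_i$, a cyclic cover $t=s^{e}$ with $\sigma^\ast D_i = e\widetilde D_i$.

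By flat base change, $\sigma^\ast f_\ast\mathcal{L} \simeq f'_\ast (\mathrm{pr}^\ast \mathcal{L})$ on $X' := X\times_S\widetilde S$, where $\mathcal{L} = \sO_X(kK_{X/S} + \lfloor k\Delta - kf^\ast R_f\rfloor)$ and $\mathrm{pr}\colon X'\to X$, $f'\colon X'\to\widetilde S$ are the projections. Relative canonical sheaves are compatible with this flat base change, so $\mathrm{pr}^\ast\omega_{X/S} = \omega_{X'/\widetilde S}$. A section of $\mathcal{L}$ is therefore a rational section $\eta$ of $\omega_{\widetilde X/\widetilde S}^{\otimes k}$ via $\tau^\ast$; this is the pullback of pluricanonical forms. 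The main task is to show that $\widetilde\tau^\ast\eta$ has poles bounded by $k\widetilde\Delta$ along every divisor of $\widetilde X$.

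This pole bound is the key step and the main obstacle, and I would carry it out by a local computation in the semistable coordinates. Near a general point of a component $P$ of $f^{-1}(D_i)$ of multiplicity $a$ in $f^\ast D_i$, write $t = u\,x^{a}$, where $x$ is a local equation of $P$. The local generator of $\omega_{X/S}$ is $dx\wedge\cdots / (x^{a-1}\,\text{unit})$ times the relative log form, i.e. $\omega_{X/S}$ agrees with $\omega_X(\log)$-type forms twisted by $f^\ast D - (f^\ast D)_{\mathrm{red}}$. Concretely, $\omega_{X/S} = \omega_{X/S}^{\log}\otimes\sO(-(f^\ast D - (f^\ast D)_{\mathrm{red}}))$, where $\omega^{\log}_{X/S} = \omega_X(f^{-1}D_{\mathrm{red}})\otimes f^\ast\omega_S(D)^{-1}$. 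Relative log canonical forms pull back to relative log canonical forms under $\widetilde\tau$ for log-smooth-in-codim-one models, and for strictly semistable $\widetilde f$ we have $\omega^{\log}_{\widetilde X/\widetilde S} = \omega_{\widetilde X/\widetilde S}$, since the fibers over $\widetilde D$ are reduced. Hence
\[
\widetilde\tau^\ast\eta \in \omega_{\widetilde X/\widetilde S}^{\otimes k}\Big(\widetilde\tau^\ast\big(\lfloor k\Delta - kf^\ast R_f\rfloor - k(f^\ast D-(f^\ast D)_{\mathrm{red}})\big)\Big)
\]
up to the log comparison.

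It remains to bound the divisor
\[
\lfloor k\Delta - kf^\ast R_f\rfloor - k\big(f^\ast D - (f^\ast D)_{\mathrm{red}}\big) \le k\Delta^h + k\Delta^v - kf^\ast R_f - k\big(f^\ast D - (f^\ast D)_{\mathrm{red}}\big) \le k\Delta^h
\]
over $U$, using the defining inequality (\ref{align_ramified_div}) of $R_f$; the part $\Delta^e$ lives over a codimension-two subset of $S$, which we have removed. Pulling back gives $\widetilde\tau^\ast(k\Delta^h) = k\widetilde\Delta + kE^h$, and $E^h$ is supported over $\sigma^{-1}(D_f)$. The care needed is here: the vertical part $E$ of the pullback of $\Delta^h$ must be absorbed. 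On the strictly semistable model, vertical components with positive coefficient would have to come from $\Delta^h$ meeting the fibers non-transversally, and this is excluded because $\Delta\cup f^{-1}D_f$ is snc and every stratum dominates a component of $D_f$. Hence $E$ has no component mapping to divisors coming from $\Delta^h$ beyond $\tau$-exceptional ones. Forms with poles along $\tau$-exceptional divisors over codimension-two loci of $\widetilde S$ do not matter after shrinking $\widetilde S$, and along exceptional divisors dominating $\widetilde D$ one uses the discrepancy of the log-canonical pair, which is $\ge -1$ and is compensated by the log structure. I expect this exceptional-divisor bookkeeping to be the hardest point, and I would handle it by comparing with the toroidal (Mumford) local model of the cyclic base change of a semistable family, where all the multiplicities are explicit.
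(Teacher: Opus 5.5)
Your strategy is the paper's: pull back relative log canonical forms, use $\omega^{\log}_{\widetilde X/\widetilde S}=\omega_{\widetilde X/\widetilde S}$ on the strictly semistable model, compare divisors via the defining inequality (\ref{align_ramified_div}) of $R_f$, and use that $\Delta^h$ pulls back to $\widetilde\Delta$. Routing the map through flat base change rather than through the adjoint map into $\sigma_\ast\widetilde f_\ast$ makes no difference. However, your central formula has the wrong sign, and as written the key step fails.

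From your own definition $\omega^{\log}_{X/S}=\omega_X((f^\ast D)_{\mathrm{red}})\otimes f^\ast\omega_S(D)^{-1}$, and from your own local remark that the generator of $\omega_{X/S}$ is $x^{-(a-1)}$ times the log generator, one gets
\[
\omega_{X/S}=\omega^{\log}_{X/S}\otimes\sO_X\big(f^\ast D-(f^\ast D)_{\mathrm{red}}\big),
\]
with a plus sign. Your version would give $\omega_{X/S}\subseteq\omega^{\log}_{X/S}$, hence $\widetilde\tau^\ast\omega_{X/S}\subseteq\omega_{\widetilde X/\widetilde S}$. That is false. Take $t=x^2$ and the base change $t=s^2$; on the branch $x=s$, the generator $dx\wedge dy\otimes dt^{-1}$ pulls back to $\frac{1}{2s}\,ds\wedge dy\otimes ds^{-1}$, which has a pole along the fibre. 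The same problem shows in your final chain of inequalities. It only uses $f^\ast R_f\ge \Delta^v-(f^\ast D-(f^\ast D)_{\mathrm{red}})$, so the multiplicity term of $R_f$, which is the whole reason for the twist, never enters.

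With the sign corrected, $\widetilde\tau^\ast\eta$ is a section of $\omega^{\otimes k}_{\widetilde X/\widetilde S}\big(\widetilde\tau^\ast(\lfloor k\Delta-kf^\ast R_f\rfloor+k(f^\ast D-(f^\ast D)_{\mathrm{red}}))\big)$. What must be shown is then $\lfloor k\Delta^v-kf^\ast R_f\rfloor\le k\big((f^\ast D)_{\mathrm{red}}-f^\ast D\big)$. This is exactly the inequality in the paper's proof, and it follows from $f^\ast R_f\ge f^\ast D-(f^\ast D)_{\mathrm{red}}+\Delta^v$.

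There are also two smaller points.

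\emph{Vertical part of $\widetilde\tau^\ast\Delta^h$.} The paper simply uses $\widetilde\tau^\ast\Delta^h=\widetilde\Delta$. Your alternative, absorbing possible vertical components of $\widetilde\tau^\ast\Delta^h$ through log discrepancies, does work, but only if you pull back the log forms of the whole pair $(X,(f^\ast D)_{\mathrm{red}}+\Delta^h)$ in one step. Log canonicity, which needs the coefficients of $\Delta^h$ to be at most $1$, then bounds the coefficient along every divisor over $\widetilde D$ by $1$, and the reduced fibre absorbs it. If, as in your write-up, you first pull back the log forms of $(X,(f^\ast D)_{\mathrm{red}})$ and only afterwards add $\widetilde\tau^\ast(k\Delta^h)$, the term $E^h$ can no longer be absorbed.

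\emph{Reduction to a big open set.} Extending a map from a big open subset needs the target to be reflexive, not merely torsion free. Here $\widetilde f_\ast\sO_{\widetilde X}(kK_{\widetilde X/\widetilde S}+k\widetilde\Delta)$ is indeed reflexive by Lemma \ref{lem_mild_pushforward}(2). The reduction is unnecessary anyway: the lemma assumes semistability over all of $S$ and $\widetilde S$, and the divisor comparison works globally.
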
  
	\begin{proof}
		Denote $S^o := S \setminus D_f$, $\widetilde{S}^o := \sigma^{-1}(S^o)$, and $D_{\widetilde{f}} := \widetilde{S} \setminus \widetilde{S}^o$. Observe that the family  
		\[
		\widetilde{f}^o \colon \big(\widetilde{X}^o := \widetilde{f}^{-1}(\widetilde{S}^o),\, \widetilde{\Delta}|_{\widetilde{X}^o}\big) \to \widetilde{S}^o  
		\]  
		is the base change of the simple normal crossing family  
		\[
		f^o \colon \big(X^o := f^{-1}(S^o),\, \Delta|_{X^o}\big) \to S^o.  
		\]  
		Consequently, there is a natural injection
		\begin{align}\label{align_pullback_pushforward2}  
			f^o_\ast \sO_{X^o}\big(kK_{X^o/S^o} + k\Delta|_{X^o}\big)  
			\to
			\sigma_\ast \widetilde{f}^o_\ast \sO_{\widetilde{X}^o}\big(kK_{\widetilde{X}^o/\widetilde{S}^o} + k\widetilde{\Delta}|_{\widetilde{X}^o}\big).  
		\end{align}  		
		To prove the lemma, it suffices to show that (\ref{align_pullback_pushforward2}) extends uniquely to an injective morphism  
		\begin{align}\label{align_pullback_pushforward3}  
			f_\ast \sO_X\big(kK_{X/S} + \lfloor k\Delta - k f^\ast R_f \rfloor\big)  
			\hookrightarrow  
			\sigma_\ast \widetilde{f}_\ast \sO_{\widetilde{X}}\big(kK_{\widetilde{X}/\widetilde{S}} + k\widetilde{\Delta}\big).  
		\end{align}  		
		Let $E := f^{-1}(D_f)$ and $\widetilde{E} := \widetilde{f}^{-1}(D_{\widetilde{f}})$ denote the schematic preimages, since $f$ and $\widetilde{f}$ are smooth over $S^o$ and $\widetilde{S}^o$, respectively, and $D_f$, $D_{\widetilde{f}}$ are snc. The pullback of relative logarithmic canonical forms yields a nonzero morphism  
		\begin{align*}  
			\widetilde{\tau}^\ast \sO_X\big(K_{X/S} + E_{\mathrm{red}} - E\big)  
			\longrightarrow  
			\sO_{\widetilde{X}}\big(K_{\widetilde{X}/\widetilde{S}}\big).  
		\end{align*}  
		
		Decompose $\Delta = \Delta^h + \Delta^v$ as in (\ref{align_decomp_div}), where $\Delta^h$ is the horizontal part of $\Delta$ and $\mathrm{supp}(\Delta^v) \subset E$. Since $\tau$ is birational and an isomorphism over $\widetilde{S}^o$, and $\Delta^h$ is horizontal, the strict transform satisfies $\widetilde{\Delta} = \widetilde{\tau}^\ast \Delta^h$.  
		
		By the definition of the ramification divisor (\ref{align_ramified_div}), we have  
		\[
		f^\ast R_f \geq E - E_{\mathrm{red}} + \Delta^v,  
		\]  
		and hence  
		\[
		\lfloor k\Delta^v - k f^\ast R_f \rfloor \leq \lfloor k\Delta^v - k(E - E_{\mathrm{red}} + \Delta^v) \rfloor \leq k(E_{\mathrm{red}} - E).
		\]  
		It follows that the natural inclusion  
		\[
		\sO_X\big(kK_{X/S} + \lfloor k\Delta^v - k f^\ast R_f \rfloor\big)  
		\hookrightarrow  
		\sO_X\big(kK_{X/S} + k(E_{\mathrm{red}} - E)\big)  
		\]  
		composes with the pullback map above to give an injective morphism  
		\[
		\widetilde{\tau}^\ast \sO_X\big(kK_{X/S} + \lfloor k\Delta^v - k f^\ast R_f \rfloor\big)  
		\hookrightarrow  
		\sO_{\widetilde{X}}\big(kK_{\widetilde{X}/\widetilde{S}}\big).  
		\]  
		
		Tensoring both sides with $\widetilde{\tau}^\ast \sO_X(k\Delta^h) \simeq \sO_{\widetilde{X}}(k\widetilde{\Delta})$, we obtain an injective morphism  
		\[
		\widetilde{\tau}^\ast \sO_X\big(kK_{X/S} + \lfloor k\Delta - k f^\ast R_f \rfloor\big)  
		\hookrightarrow  
		\sO_{\widetilde{X}}\big(kK_{\widetilde{X}/\widetilde{S}} + k\widetilde{\Delta}\big).  
		\]  
		
		Applying the pushforward $\widetilde{\tau}_\ast$ (which preserves injectivity, as $\tau$ is proper and birational), we deduce  
		\[
		\sO_X\big(kK_{X/S} + \lfloor k\Delta - k f^\ast R_f \rfloor\big)  
		\hookrightarrow  
		\widetilde{\tau}_\ast \sO_{\widetilde{X}}\big(kK_{\widetilde{X}/\widetilde{S}} + k\widetilde{\Delta}\big).  
		\]  
		
		Finally, applying $f_\ast$ and using the commutativity $f_\ast \widetilde{\tau}_\ast = \sigma_\ast \widetilde{f}_\ast$ (by the diagram (\ref{align_semistablereduction_cod1})), we obtain the desired injective morphism  
		\begin{align*}  
			f_\ast \sO_X\big(kK_{X/S} + \lfloor k\Delta - k f^\ast R_f \rfloor\big)  
			\hookrightarrow  
			\sigma_\ast \widetilde{f}_\ast \sO_{\widetilde{X}}\big(kK_{\widetilde{X}/\widetilde{S}} + k\widetilde{\Delta}\big),  
		\end{align*}  
		thereby establishing (\ref{align_pullback_pushforward3}).  
	\end{proof}
    We will also require the following lemma, originally due to Viehweg \cite{Viehweg1983}.
    \begin{lem}(\cite[Lemma 5.5]{stjc2025})\label{lem_Viehweg_inclusion}
    	Let $g: Y \to S$ be a proper surjective morphism from a simple normal crossing variety $Y$ to a smooth variety $S$. Let $\tau: S' \to S$ be a flat projective surjective morphism from a smooth variety $S'$. Let $\Delta \geq 0$ be an integral Cartier divisor on $Y$. Consider the following commutative diagram:  
    	\begin{align}  
    		\xymatrix{  
    			Y \ar[d]^g & Y'' \ar[l]_{\rho'} \ar[d]^{g''} & Y' \ar[l]_{\rho} \ar[dl]^{g'} \\  
    			S & S' \ar[l]_{\tau} &  
    		},  
    	\end{align}  
    	where $g'': Y'' \to S'$ is the base change of $g$, and $\rho: Y' \to Y''$ is a semi-log resolution of singularities. Then, for every $r \geq 1$, there exists a natural inclusion  
    	$$  
    	g'_\ast\left(\sO_{Y'}(rK_{Y'/S'} + (\rho'\rho)^\ast\Delta)\right) \subset \tau^\ast g_\ast\left(\sO_Y(rK_{Y/S} + \Delta)\right).  
    	$$  
    \end{lem}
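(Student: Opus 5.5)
The plan is to prove Lemma \ref{lem_Viehweg_inclusion} by comparing relative dualizing sheaves along the chain $Y' \xrightarrow{\rho} Y'' \xrightarrow{\rho'} Y$. The key facts are flat base change for $\omega$ and the fact that a semi-log resolution can only decrease pluricanonical sections.

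\emph{Step 1: base change on $Y''$.} Since $\tau$ is flat and $g$ is a morphism from a Gorenstein (snc) scheme, the relative dualizing sheaf commutes with flat base change. Hence $\omega_{Y''/S'} \simeq \rho'^\ast \omega_{Y/S}$, and $Y''$ is Gorenstein with invertible relative dualizing sheaf. In particular
\[
\sO_{Y''}(rK_{Y''/S'} + \rho'^\ast\Delta) \simeq \rho'^\ast \sO_Y(rK_{Y/S}+\Delta).
\]
Flat base change for cohomology (with $g$ proper and $\tau$ flat) then gives
\[
g''_\ast \sO_{Y''}(rK_{Y''/S'}+\rho'^\ast\Delta) \simeq \tau^\ast g_\ast \sO_Y(rK_{Y/S}+\Delta).
\]

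\emph{Step 2: reduce to $\rho$.} It now suffices to construct an inclusion
\[
\rho_\ast \sO_{Y'}(rK_{Y'/S'} + \rho^\ast\rho'^\ast\Delta) \subset \sO_{Y''}(rK_{Y''/S'}+\rho'^\ast\Delta)
\]
and then apply $g''_\ast$, using $g' = g''\rho$. By the projection formula, since $\rho'^\ast\Delta$ is Cartier, this reduces to $\rho_\ast \omega_{Y'/S'}^{\otimes r} \subset \omega_{Y''/S'}^{\otimes r}$. As $\omega_{Y'/S'} = \omega_{Y'}\otimes g'^\ast\omega_{S'}^{-1}$, and similarly on $Y''$, this is in turn the inclusion $\rho_\ast\omega_{Y'}^{\otimes r}\subset \omega_{Y''}^{\otimes r}$.

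\emph{Step 3: the trace inclusion.} Over the locus where $\rho$ is an isomorphism (which contains the snc locus of $Y''$, including the generic points and codimension-one double locus), there is a canonical identification. Since $\omega_{Y''}^{\otimes r}$ is invertible and $Y''$ is $S_2$ (being Gorenstein), a section of $\rho_\ast\omega_{Y'}^{\otimes r}$ defines a section of $\omega_{Y''}^{\otimes r}$ away from codimension two. It therefore extends. Injectivity follows because $Y'$ has no embedded components and $\rho$ is birational on each component.

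\emph{Main obstacle.} The hard step is ensuring that $\rho$ is an isomorphism in codimension one of $Y''$, or at least over the generic points of the double locus, so that sections on $Y'$ do not acquire poles along divisors of $Y''$. For this I would use the defining property of a semi-log resolution: it is an isomorphism over the snc locus, which has complement of codimension $\ge 2$ when $Y''$ is demi-normal. If $Y''$ is not demi-normal (for instance if it is non-reduced over branch loci of $\tau$), then I would instead argue via Grothendieck trace: $\rho_\ast\omega_{Y'}\to\omega_{Y''}$ always exists, and for $r>1$ one compares through the normalization (demi-normalization). This is the point where I expect to need care; it is precisely Viehweg's original argument.
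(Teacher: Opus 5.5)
The paper does not prove this lemma; it quotes it from \cite{stjc2025}, going back to Viehweg. Your argument therefore has to stand on its own. Steps~1 and~2 are correct. Flat base change for $g^!$ (which does not need $g$ flat) shows that $Y''$ is Gorenstein with $\omega_{Y''/S'}\simeq\rho'^\ast\omega_{Y/S}$. Flat base change identifies $g''_\ast\rho'^\ast$ with $\tau^\ast g_\ast$. The projection formula then reduces everything to $\rho_\ast\omega_{Y'}^{\otimes r}\subset\omega_{Y''}^{\otimes r}$. Step~3 is also correct, but only under the extra hypothesis that $\rho$ is an isomorphism over every codimension-one point of $Y''$ (e.g.\ $Y''$ demi-normal).

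\textbf{The gap.} That hypothesis fails exactly where the lemma is used, and your fallback is not an argument.
\begin{itemize}
\item In the proof of Theorem~\ref{thm_Arakelov_family}, $g$ is only semistable, with multiple fibre components over $D$, and $\tau=\sigma$ is a Kawamata cover ramified along $D$.
\item Locally, $t=x^3$ pulled back by $t=s^3$ gives $Y''=\{x^3=s^3\}$: three smooth sheets through a codimension-one locus. Pulling back by $t=s^6$ instead gives three mutually tangent sheets.
\item Neither is a node, and no snc model is an isomorphism there. So this is not something you can `ensure'.
\item The trace $\rho_\ast\omega_{Y'}\hookrightarrow\omega_{Y''}$ does not by itself give the $r$-th power statement. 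From $\omega_{Y'}\subset\rho^\ast\omega_{Y''}$ you only get $\rho_\ast\omega_{Y'}^{\otimes r}\subset\omega_{Y''}^{\otimes r}\otimes\rho_\ast\sO_{Y'}$, and $\rho_\ast\sO_{Y'}\supsetneq\sO_{Y''}$ at such points.
\end{itemize}

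\textbf{The missing idea is the conductor.} Over a codimension-one point $\xi$ of $Y''$, $\rho$ is finite and birational (by properness and a dimension count). Duality for finite maps then gives $\rho_\ast\omega_{Y'}=\mathcal{H}om(\rho_\ast\sO_{Y'},\omega_{Y''})=\mathfrak{c}\,\omega_{Y''}$, where $\mathfrak{c}$ is the conductor of $\sO_{Y''}\subset\rho_\ast\sO_{Y'}$. Since $(\rho_\ast\omega_{Y'})_\xi$ is an invertible module over the semilocal ring $B=(\rho_\ast\sO_{Y'})_\xi$, you get $\mathfrak{c}_\xi=gB$ for some $g\in\mathfrak{c}_\xi\subset B$. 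Hence
\[
(\rho_\ast\omega_{Y'}^{\otimes r})_\xi=g^{r}B\cdot\omega_{Y'',\xi}^{\otimes r}\subset gB\cdot\omega_{Y'',\xi}^{\otimes r}=\mathfrak{c}_\xi\,\omega_{Y'',\xi}^{\otimes r}\subset\omega_{Y'',\xi}^{\otimes r}.
\]
Your $S_2$ argument, together with torsion-freeness of $\rho_\ast\omega_{Y'}^{\otimes r}$, then extends the inclusion over codimension $\geq 2$.

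This reduction to codimension one is genuinely needed. Over deeper points $\omega_{Y'}\subset\rho^\ast\omega_{Y''}$ can fail (exceptional divisors of positive discrepancy), so a global comparison on $Y'$ would not work.

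Finally, birationality of $\rho$ requires $Y''$ to be reduced. This holds when every component of $Y$ dominates $S$: then $Y''$ is Cohen--Macaulay and generically reduced by generic smoothness of $\tau$. Otherwise you must work with the main component, as in Proposition~\ref{prop_ssr_cod1}.
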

	\subsection{Fiber product}\label{section_fiberprod}
	Let $f \colon (X, \Delta) \to S$ be a morphism from a simple normal crossing pair $(X, \Delta)$ to a smooth variety $S$. Its $r$-fold fiber product $f^{[r]} \colon (X^{[r]}_S, \Delta^{[r]}_S) \to S$ is defined as follows:  
	\begin{itemize}  
		\item $X^{[r]}_S$ denotes the $r$-fold fiber product $X \times_S X \times_S \cdots \times_S X$;  
		\item $\Delta^{[r]}_S := \sum_{i=1}^r p_i^\ast \Delta$, where $p_i \colon X^{[r]}_S \to X$ is the projection onto the $i$-th factor;  
		\item $f^{[r]} \colon (X^{[r]}_S, \Delta^{[r]}_S) \to S$ is the natural structure morphism.  
	\end{itemize}  
	
	Assume that $f \colon (X,\Delta) \to S$ is strictly semistable. Then $f$ is a locally stable family of slc pairs in the sense of Koll\'ar \cite{Kollar2023}. Under this assumption, $f^{[r]} \colon (X^{[r]}_S, \Delta^{[r]}_S) \to S$ is a locally stable family of slc pairs (\cite[Corollary 4.3]{WeiWu2023}), and $(X^{[r]}_S, \Delta^{[r]}_S)$ itself is an slc pair (\cite[Theorem 4.54]{Kollar2023}).  
	
	Let  
	\[
	\tau \colon (X^{(r)}_S, \Delta^{(r)}_S) \to (X^{[r]}_S, \Delta^{[r]}_S)  
	\]  
	be a semi-log canonical resolution (\cite{Bierstone2013} or \cite[\S10.4]{Kollar2013}), where $\Delta^{(r)}_S$ is a reduced simple normal crossing divisor satisfying  
	\begin{align*}
		\tau^\ast(K_{X^{[r]}_S} + \Delta^{[r]}_S) = K_{X^{(r)}_S} + \Delta^{(r)}_S - E,  
	\end{align*}  
	with $E$ an effective $\tau$-exceptional $\mathbb{Q}$-divisor whose support contains no component of $\Delta^{(r)}_S$.  	
	\begin{lem}[\cite{stjc2025}, Lemma 5.4]\label{lem_mild_pushforward}  
		Assume that $f \colon (X, \Delta) \to S$ is strictly semistable, and let $k \geq 1$ be an integer such that $kK_{X/S} + k\Delta$ is Cartier. Then:  
		\begin{enumerate}  
			\item $\tau_\ast \sO_{X^{(r)}_S}\big(kK_{X^{(r)}_S/S} + k\Delta^{(r)}_S\big) \simeq \sO_{X^{[r]}_S}\big(kK_{X^{[r]}_S/S} + k\Delta^{[r]}_S\big)$;  
			\item $f^{[r]}_\ast \sO_{X^{[r]}_S}\big(kK_{X^{[r]}_S/S} + k\Delta^{[r]}_S\big)$ is reflexive;  
			\item $f^{[r]}_\ast \sO_{X^{[r]}_S}\big(kK_{X^{[r]}_S/S} + k\Delta^{[r]}_S\big) \simeq \big(f_\ast \sO_X(kK_{X/S} + k\Delta)\big)^{\otimes r \vee\vee}$.  
		\end{enumerate}  
	\end{lem}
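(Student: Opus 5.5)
The plan is to compare all three sheaves over the open set $S^o := S \setminus D_f$, where everything is explicit, and then extend across $D_f$ using reflexivity and the fact that codimension-two loci do not matter.

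\textbf{Step 1: statement (1).} Since $f$ is strictly semistable, $(X^{[r]}_S,\Delta^{[r]}_S)$ is slc and $f^{[r]}$ is locally stable. The resolution $\tau$ satisfies
\[
\tau^\ast(K_{X^{[r]}_S}+\Delta^{[r]}_S)=K_{X^{(r)}_S}+\Delta^{(r)}_S-E,
\]
with $E\ge 0$ exceptional. Multiplying by $k$ and using that $kK_{X^{[r]}_S/S}+k\Delta^{[r]}_S$ is Cartier (it is $\sum_i p_i^\ast$ of a Cartier divisor), the projection formula gives
\[
\tau_\ast\mathcal{O}\big(kK_{X^{(r)}_S/S}+k\Delta^{(r)}_S\big)=\mathcal{O}\big(kK_{X^{[r]}_S/S}+k\Delta^{[r]}_S\big)\otimes\tau_\ast\mathcal{O}(kE).
\]
Here $\tau_\ast\mathcal{O}(kE)=\mathcal{O}$ because $E$ is effective and exceptional, and $X^{[r]}_S$ is $S_2$ (it is slc, hence Cohen–Macaulay in the locally stable setting; alternatively it is demi-normal). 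The one caveat is that $k\Delta^{(r)}_S$ is integral, since $\Delta^{(r)}_S$ is reduced, so no rounding issue appears.

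\textbf{Step 2: statement (3) on the smooth locus.} Over $S^o$ the map $f$ is flat and every fiber is a snc pair. Flat base change and the Künneth formula for the relative dualizing sheaf give
\[
\omega^{k}_{X^{[r]}/S}(k\Delta^{[r]})=\boxtimes_i\,\omega^k_{X/S}(k\Delta).
\]
The Künneth formula for pushforward along the flat maps then identifies $f^{[r]}_\ast$ of this sheaf with $\big(f_\ast\mathcal{O}_X(kK_{X/S}+k\Delta)\big)^{\otimes r}$ over the locus where $f_\ast$ is locally free and commutes with base change. The same holds over the codimension-one points of $D_f$: there strict semistability makes $f$ flat, with reduced snc fibers and Cartier boundary. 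Base change for the pluricanonical pushforward in locally stable families (Kollár's results, or invariance of plurigenera in the slc setting) makes $f_\ast$ locally free there as well. Hence (3) holds outside a codimension-two subset $Z\subset S$.

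\textbf{Step 3: statement (2), the main obstacle.} I would prove reflexivity first and then deduce (3) globally, since two reflexive sheaves agreeing off codimension two coincide. Write $\mathcal{F}=f^{[r]}_\ast\mathcal{O}(kK_{X^{[r]}_S/S}+k\Delta^{[r]}_S)$. It is torsion free. To show it is reflexive, take a local section $s$ defined on $S\setminus Z$; we must show it extends. Pulled back, $s$ is a section of the line bundle $L=\mathcal{O}(kK+k\Delta^{[r]})$ on $(f^{[r]})^{-1}(S\setminus Z)$. Because $f^{[r]}$ is locally stable and equidimensional, the preimage of $Z$ has codimension at least two in $X^{[r]}_S$. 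Since $X^{[r]}_S$ is $S_2$ and $L$ is invertible, the section extends, so $\mathcal{F}$ is reflexive. The delicate points are the equidimensionality and $S_2$ property of the fiber product over $S$; I would cite \cite{WeiWu2023} and \cite[Theorem 4.54]{Kollar2023} for these. Combining this with Step 2 gives (3).
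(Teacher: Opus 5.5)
The paper gives no proof of this lemma; it is quoted from \cite[Lemma 5.4]{stjc2025}, so there is no internal argument to compare with. Your three-step outline is a sound self-contained proof. Step 1 uses the discrepancy formula, the projection formula and $\tau_\ast\sO(kE)=\sO$ on the $S_2$ scheme $X^{[r]}_S$. Step 3 gets reflexivity from Hartogs extension of sections of the line bundle $\sO(kK_{X^{[r]}_S/S}+k\Delta^{[r]}_S)$ across $(f^{[r]})^{-1}(Z)$, which has codimension $\ge 2$ because $f^{[r]}$ is flat. Statement (3) then follows by comparing reflexive sheaves off codimension two. Steps 1 and 3 are fine, with two small remarks on Step 1. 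First, $kE$ is integral because it is the difference of two Cartier divisors. Second, the Cartier property of $kK_{X^{[r]}_S/S}+k\Delta^{[r]}_S$ needs $\omega_{X^{[r]}_S/S}\simeq\bigotimes_i p_i^\ast\omega_{X/S}$ over all of $S$, not just over $S^o$ as in your Step 2. This holds because $f$ is flat with Gorenstein fibres.

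The justification you must replace is in Step 2, at the generic points of $D_f$. You restrict to the locus where $f_\ast\sO_X(kK_{X/S}+k\Delta)$ is locally free \emph{and commutes with base change}, and you get base change there from ``invariance of plurigenera'' for locally stable families. That claim is false without relative ampleness. Blow up a smooth curve $\Gamma$ in the central fibre $Y_0$ of a smooth family of surfaces of general type over a curve. The result is strictly semistable with central fibre $X_0=\widetilde{Y}_0\cup E$. Put $C=\widetilde{Y}_0\cap E$; since $K_E+C$ has degree $-1$ on the rulings of $E$, one gets $h^0(X_0,\omega_{X_0}^{\otimes k})=h^0(Y_0,kK_{Y_0}+(k-1)\Gamma)$. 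This exceeds the plurigenus of the general fibre for $\Gamma$ ample and $k\ge 2$. Koll\'ar's base-change results concern $\omega^{[k]}_{X/S}$ itself, not its pushforward.

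Base change is not needed, however. Since $f$ is flat, flat base change along $f$ gives $p_{1\ast}p_2^\ast G\simeq f^\ast f_\ast G$. The projection formula then yields $f^{[2]}_\ast(p_1^\ast F\otimes p_2^\ast G)\simeq f_\ast F\otimes f_\ast G$ wherever $f_\ast G$ is locally free. By induction, $f^{[r]}_\ast$ of the $r$-fold box product equals $(f_\ast F)^{\otimes r}$ wherever $f_\ast F$ is locally free. Since $f_\ast F$ is torsion free on the smooth variety $S$, this locus has complement of codimension $\ge 2$: its stalks at codimension-one points are torsion-free modules over a DVR, hence free. Your Step 3 then completes the proof.
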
 
	\subsection{Proof of Theorem \ref{thm_Arakelov_ineq}}\label{section_VZHiggs_stable_family}
	\subsubsection{}
	We fix a projective morphism $f \colon X \to S$ from a projective simple normal crossing slc pair $(X,\Delta)$ to a smooth projective variety $S$. Let $d = \dim S$ and $n = \dim X - \dim S$. Suppose there exists a reduced simple normal crossing divisor $D_f \subset S$ satisfying:  
	\begin{enumerate}  
		\item[(i)] the restriction $f^o := f|_{X^o} \colon (X^o, \Delta^o) \to S^o$ is a simple normal crossing family, where $S^o := S \setminus D_f$, $X^o := f^{-1}(S^o)$, and $\Delta^o := \Delta|_{X^o}$;  
		\item[(ii)] no irreducible component of $f^{-1}(D_f)$ is contained in $\mathrm{supp}(\Delta)$.  
	\end{enumerate}  
	
	Let $L$ be a torsion-free coherent sheaf of rank one on $S$, and suppose there exists a nonzero morphism  
	\begin{align}\label{align_s}  
		s_L \colon L^{\otimes k} \longrightarrow f_\ast\big(\sO_X(kK_{X/S} + k\Delta)\big)  
	\end{align}  
	for some integer $k \geq 1$ such that $k\Delta$ is Cartier.  
	
	\begin{thm}[Theorem 2.13 \cite{stjc2026}]\label{thm_VZ_Arakelov}  
		Let the notation be as above. If $K_S + D_f$ is pseudo-effective, then  
		\begin{align}\label{align_Arakelov_ineq_movable}  
			\big(c_1(L) - D_f\big) \cdot \alpha \leq \frac{1}{n+1} \sum_{p=1}^{n} p \, d^{p-1} \, \big(K_S + D_f\big) \cdot \alpha  
		\end{align}  
		for every movable curve class $\alpha \in N_1(S)$.  
		
		If, moreover, $K_S + D_f$ is ample, then the absolute Arakelov-type inequality  
		\begin{align}\label{align_Araineq_abs1}  
			\big(c_1(L) - [D_f]\big) \cdot \big(K_S + D_f\big)^{d-1} \leq \frac{n}{d} \, \big(K_S + D_f\big)^d  
		\end{align}  
		holds.  
	\end{thm}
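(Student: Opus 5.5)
The plan is to follow the Viehweg--Zuo strategy: produce a logarithmic Higgs sheaf out of $s_L$, then play the negativity of Higgs kernels against the positivity of $\Omega_S(\log D_f)$.

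\emph{Step 1: a cyclic cover.} The nonzero morphism $s_L$ in (\ref{align_s}) gives a nonzero section of $\sO_X(kK_{X/S}+k\Delta)\otimes f^\ast L^{-k}$. I would take the $k$-th root cyclic cover $Z\to X$ branched along the divisor of this section (together with $k\Delta$), and then a semi-log resolution $Z'\to Z$. After shrinking $S^o$ in codimension two, or using the strictly semistable reduction of Proposition \ref{prop_ssr_cod1} together with Lemma \ref{lem_functorial_pushforward_pluricanonical}, the family $g\colon Z'\to S$ is an snc family over $S^o$. Its relative cohomology $R^ng^o_\ast$ is an admissible (mixed) variation of Hodge structure on $S^o$.

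\emph{Step 2: the Higgs sheaf.} Let $(E=\bigoplus_{p+q=n}E^{p,q},\theta)$ be the logarithmic Higgs bundle attached to the Deligne extension, with
\[
\theta\colon E^{p,q}\to E^{p-1,q+1}\otimes\Omega_S(\log D_f).
\]
The eigensheaf of the cover corresponding to $L^{-1}$ yields a nonzero map
\[
L\otimes\sO_S(-D_f)\to E^{n,0}.
\]
The twist by $-D_f$ accounts for the poles along the boundary; this is the origin of the term $c_1(L)-D_f$ in (\ref{align_Arakelov_ineq_movable}). I would then apply the Higgs field iteratively, obtaining nonzero maps
\[
L(-D_f)\to E^{n-p,p}\otimes \mathrm{Sym}^p\Omega_S(\log D_f),\qquad 0\le p\le q,
\]
where $q\le n$ is maximal with nonzero image. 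The image of the last map lies in $\ker\theta\otimes\mathrm{Sym}^q\Omega_S(\log D_f)$. Kernels of the Higgs field of a Deligne extension are weakly negative (Zuo, Brunebarbe), so $\ker\theta$ has nonpositive degree on every movable class.

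\emph{Step 3: the numerical inequality.} For a movable class $\alpha$, I would bound the slopes of the torsion-free quotients of $\mathrm{Sym}^p\Omega_S(\log D_f)$ through which these maps factor. The input here is Campana--P\u{a}un: pseudo-effectivity of $K_S+D_f$ forces every quotient of $\Omega_S(\log D_f)^{\otimes p}$ to have $\alpha$-slope controlled by $(K_S+D_f)\cdot\alpha$. Combined with a crude estimate of the rank of $\mathrm{Sym}^p$, this gives a bound of the shape $p\,d^{p-1}(K_S+D_f)\cdot\alpha$ at the $p$-th step.

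The sub-Higgs sheaf generated by the images has nonpositive $\alpha$-degree. Writing this out gives one inequality per level $p$, and averaging over the $n+1$ levels (exactly as in the classical proof of the Arakelov inequality over curves) produces the factor $\frac{1}{n+1}\sum_{p=1}^n p\,d^{p-1}$ in (\ref{align_Arakelov_ineq_movable}).

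\emph{Step 4: the ample case.} When $K_S+D_f$ is ample, I would instead use Simpson's parabolic polystability of the Higgs bundle with respect to $K_S+D_f$. This gives a sharper slope bound for the relevant quotients of $\mathrm{Sym}^p\Omega_S(\log D_f)$: it is semistable of slope $\frac{p}{d}(K_S+D_f)^d$ with respect to $(K_S+D_f)^{d-1}$. That sharper bound yields (\ref{align_Araineq_abs1}).

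\emph{Main obstacle.} I expect the hard step to be Step 3, namely obtaining uniform slope bounds for quotients of $\mathrm{Sym}^p\Omega_S(\log D_f)$ along movable classes when $K_S+D_f$ is only pseudo-effective; this is where the non-sharp factor $d^{p-1}$ enters. A secondary technical point is checking that the extension of $s_L$ across $D_f$ lands in $E^{n,0}$ after the $-D_f$ twist, which is where the slc and snc hypotheses (i)--(ii) are used.
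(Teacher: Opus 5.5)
The paper does not prove this statement; it is quoted from \cite{stjc2026}. The introduction indicates the same Viehweg--Zuo Higgs-bundle route you take: positivity of $\Omega_S(\log D)$ via Campana--P\u{a}un, and parabolic semistability via Simpson. Measured against the statement itself, however, your sketch has two genuine gaps.

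\emph{First gap: the boundary divisor (Step 1).} The branch locus of your cyclic cover is the zero divisor $B$ of the section of $\sO_X(kK_{X/S}+k\Delta)\otimes f^\ast L^{-k}$, and $B$ is arbitrary. So the cover is an snc family only over the complement of a larger divisor $D'\supset D_f$. This $D'$ contains the locus where $B$ (together with $\Delta$) fails to be relatively snc, which in general is a divisor meeting $S^o$. Neither shrinking in codimension two nor semistable reduction of $f$ removes it. Consequently the Higgs field of the Deligne extension $(E,\theta)$ has log poles along $D'$, and your Steps 3--4 would only give the weaker inequality with $D'$ in place of $D_f$ (and $D'$ need not even be snc).

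The missing idea is the Viehweg--Zuo comparison. Build a Higgs sheaf directly from $f$: take the relative log de Rham complex of $f$, logarithmic only along $f^{-1}(D_f)$, twisted by the inverse of the root line bundle defining the cover. Its Higgs field has poles only along $D_f$, and it maps to $(E,\theta)$ as a morphism of Higgs sheaves. Only then do the iterates of $L(-D_f)$ land in $E^{n-p,p}\otimes\mathrm{Sym}^p\Omega_S(\log D_f)$.

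\emph{Second gap: the constant (Step 3).} Step 3 does not produce the stated constant. Let $\sF^{n-p,p}$ be the image of $L(-D_f)\otimes\mathrm{Sym}^pT_S(-\log D_f)\to E^{n-p,p}$, of rank $r_p\geq 1$ for $0\le p\le q$, and set $x=(c_1(L)-D_f)\cdot\alpha$. What you need at each level is a \emph{degree} bound that does not scale with $r_p$.

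By Campana--P\u{a}un, every torsion-free quotient of $\Omega_S(\log D_f)^{\otimes p}$ has nonnegative $\alpha$-degree. Hence the subsheaf $(\sF^{n-p,p})^\vee\otimes L(-D_f)\subset\mathrm{Sym}^p\Omega_S(\log D_f)\subset\Omega_S(\log D_f)^{\otimes p}$ has $\alpha$-degree at most $pd^{p-1}(K_S+D_f)\cdot\alpha$; this is where the factor $pd^{p-1}$ comes from. Equivalently, $c_1(\sF^{n-p,p})\cdot\alpha\geq r_p x-pd^{p-1}(K_S+D_f)\cdot\alpha$.

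Summing against the \emph{single} inequality $c_1(\bigoplus_p\sF^{n-p,p})\cdot\alpha\le 0$ gives $x\sum_p r_p\le\sum_{p\le q}pd^{p-1}(K_S+D_f)\cdot\alpha$. You then conclude using three facts: $\sum_p r_p\geq q+1$; $(K_S+D_f)\cdot\alpha\geq 0$; and the averages $\frac{1}{q+1}\sum_{p=0}^{q}pd^{p-1}$ increase with $q$.

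A per-level \emph{slope} bound followed by averaging, as you describe, only yields the weighted average $\sum_p r_pc_p/\sum_p r_p$. This exceeds the claimed constant as soon as some $r_p>1$: for $n=1$, $d\geq 2$, $r_1=2$ you get $\tfrac23(K_S+D_f)\cdot\alpha$ instead of $\tfrac12(K_S+D_f)\cdot\alpha$.

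\emph{Smaller points.} Weak negativity of $\ker\theta$ is not the input you need. You need $c_1(\sG)\cdot\alpha\le 0$ for every $\theta$-invariant subsheaf $\sG$ of the Deligne extension, which comes from the curvature of the Hodge metric plus boundary norm estimates. In Step 4 the weighted average is harmless, since only the maximum $n/d$ is needed. However, the semistability of $\mathrm{Sym}^p\Omega_S(\log D_f)$ with respect to $K_S+D_f$ comes from the K\"ahler--Einstein metric on $S\setminus D_f$, using log smoothness of $(S,D_f)$. Parabolic polystability of $(E,\theta)$ only supplies $c_1(\sG)\cdot(K_S+D_f)^{d-1}\le 0$.
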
 
\subsubsection{}We are now ready to prove Theorem \ref{thm_Arakelov_ineq}.  

\begin{thm}\label{thm_Arakelov_family}  
	Let $f \colon (X,\Delta) \to S$ be a projective morphism from a projective simple normal crossing slc pair $(X,\Delta)$ to a smooth projective variety $S$ of dimension $d$. Let $d = \dim S$ and $n = \dim X - \dim S$. Let $D_f \subset S$ be a closed algebraic subset such that $f$ restricts to a simple normal crossing family over $S \setminus D_f$, and let $D \subset S$ denote the codimension-one part of $D_f$. Let $R_f\leq D$ be the ramification divisor associated with $f \colon (X,\Delta) \to S$.  
	
	Let $k, r \geq 1$ be integers such that $k\Delta$ is integral, and let  
	\[
	W \subset \Big(f_\ast \mathscr{O}_X\big(kK_{X/S} + \lfloor k\Delta - k f^\ast R_f \rfloor\big)^{\otimes r}\Big)^{\vee\vee}  
	\]  
	be a coherent subsheaf of rank $l$. If $K_S + D$ is pseudo-effective, then for every integer $m > 0$ and every movable curve class $\alpha \in N_1(S)$,  
	\begin{align*}
		c_1(W)\cdot \alpha &\leq \frac{1}{m(mklrn + 1)}\sum_{p=1}^{mklrn}pd^{p-1}(K_S + D) \cdot \alpha+\frac{2}{m}D\cdot \alpha
	\end{align*}
	
	If, moreover, $(S,D)$ is log smooth and $K_S + D$ is ample, then the inequality  
	\begin{align*}
		c_1(W)\cdot(K_S+D)^{d-1} \leq \frac{klrn}{d}(K_S+D)^{d}
	\end{align*} 
	holds.  
\end{thm}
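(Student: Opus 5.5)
Our strategy is to reduce to Theorem \ref{thm_VZ_Arakelov}, applied to a fiber product family, after a strictly semistable reduction. We will produce a rank one sheaf $L$ with $L^{\otimes N}\to$ a pluricanonical pushforward of a fiber power, with $c_1(L)$ controlled by $c_1(W)$.

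\textbf{Step 1 (reduction to the strictly semistable case).} Shrinking $S$ away from a codimension $\ge 2$ subset does not affect intersections with movable classes, nor reflexive sheaves, so we may assume that $f$ is semistable in codimension one, after an slc desingularization, and that $D_f=D$ is snc. By Proposition \ref{prop_ssr_cod1} we take $\sigma\colon\widetilde S\to S$ finite flat, with $\widetilde f\colon(\widetilde X,\widetilde\Delta)\to\widetilde S$ strictly semistable in codimension one and $\sigma^{-1}D$ reduced snc. Lemma \ref{lem_functorial_pushforward_pluricanonical} gives
\[
\sigma^\ast W\hookrightarrow \Big(\widetilde f_\ast\sO_{\widetilde X}(kK_{\widetilde X/\widetilde S}+k\widetilde\Delta)^{\otimes r}\Big)^{\vee\vee}\simeq \widetilde f^{[r]}_\ast\sO\big(kK_{\widetilde X^{[r]}/\widetilde S}+k\widetilde\Delta^{[r]}\big),
\]
where the isomorphism is Lemma \ref{lem_mild_pushforward}(3). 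Moreover $\sigma^\ast(K_S+D)\le K_{\widetilde S}+\sigma^{-1}D$ by the log ramification formula, with equality over the snc locus, and $\sigma^\ast\alpha$ is movable. Both sides of the desired inequality scale by $\deg\sigma$, so it suffices to treat $\widetilde f$.

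\textbf{Step 2 (determinant trick).} Next we take $\det W\hookrightarrow \bigotimes^{l}$ of the above sheaf via the alternating sum. We then take its $m$-th tensor power and apply Lemma \ref{lem_mild_pushforward} again. This gives a nonzero map $(\det W)^{\otimes m}\to g_\ast\sO(kK_{Y/S}+k\Delta_Y)$, where $g\colon(Y,\Delta_Y)\to\widetilde S$ is the $mlr$-fold fiber product, resolved by an slc resolution $\tau$. Its relative dimension is $mlrn$. To match the form $L^{\otimes k'}\to g_\ast(k'K+\cdots)$ of \eqref{align_s}, we set $L:=\det W$ and view the map as $L^{\otimes mk}\to g_\ast\sO(mk(K_{Y/S}+\Delta_Y))$ after raising to the $m k$-th power. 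Equivalently, we use $k$ and the $\mathbb Q$-twist $\frac1k$, choosing the normalisation so that $c_1$ of the rank one sheaf equals $m\,c_1(W)$ up to the factor $k$. The relative dimension entering the Arakelov inequality is then $mklrn$ after the standard Kawamata cyclic cover trick that converts a section of $k$-th power into a section of a canonical sheaf. We expect this bookkeeping to produce $mklrn$ as the exponent.

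\textbf{Step 3 (apply Theorem \ref{thm_VZ_Arakelov}).} Applying Theorem \ref{thm_VZ_Arakelov} gives
\[
\big(m\,c_1(W)-2D\big)\cdot\alpha\le \frac{1}{mklrn+1}\sum_{p=1}^{mklrn} p\,d^{p-1}(K_S+D)\cdot\alpha .
\]
Here the ``$-2D$'' combines the $-D_f$ of the theorem with one further copy of $D$ lost in the cyclic cover branching. Dividing by $m$ yields the stated bound. In the ample, log smooth case, \eqref{align_Araineq_abs1} gives $(m\,c_1(W)-2D)\cdot(K_S+D)^{d-1}\le \frac{mklrn}{d}(K_S+D)^d$. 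Dividing by $m$ and letting $m\to\infty$ kills the $D$ term and gives the claimed inequality.

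\textbf{Main obstacle.} The hard step is Step 2: realizing $(\det W)^{\otimes m}$ inside a pushforward of the correct form $L^{\otimes k'}\to g_\ast(k'K+k'\Delta)$ while keeping hypothesis (ii) of Theorem \ref{thm_VZ_Arakelov}, namely that no component of the boundary fibers lies in $\Delta$. Here strict semistability, which makes $\widetilde\Delta$ horizontal, is essential. We also need to track the constant exactly, to confirm that the loss is $2D/m$ and that the relative dimension is $mklrn$.
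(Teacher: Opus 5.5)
There is a genuine gap in your Step 1, and it is exactly where the paper argues differently. You plan to prove the inequality on $\widetilde S$ and transport it to $S$, claiming that both sides scale by $\deg\sigma$. They do not. The cover $\sigma$ of Proposition~\ref{prop_ssr_cod1} comes from Kawamata's covering construction. That cover is in general branched not only along $D$ but also along auxiliary general ample divisors $H_j$, so $\sigma$ is not \'etale over $S\setminus D$. The log ramification formula gives $K_{\widetilde S}+\sigma^{-1}D=\sigma^\ast(K_S+D)+R$, where $R\ge 0$ is the ramification over the $H_j$ (essentially $\sigma^\ast\sum_j(1-\frac{1}{e_j})H_j$). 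The inequality you wrote, $\sigma^\ast(K_S+D)\le K_{\widetilde S}+\sigma^{-1}D$, therefore points the wrong way for your purpose. Theorem~\ref{thm_VZ_Arakelov} on $\widetilde S$ bounds $c_1(\sigma^\ast W)\cdot\sigma^\ast\alpha$ by a multiple of $(K_{\widetilde S}+\sigma^{-1}D)\cdot\sigma^\ast\alpha=\deg\sigma\,(K_S+D)\cdot\alpha+R\cdot\sigma^\ast\alpha$. After dividing by $\deg\sigma$ you are left with an extra term $\sum_j(1-\frac{1}{e_j})H_j\cdot\alpha>0$ coming from ample divisors, and it does not go away. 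Covers branched only along $D$ with the prescribed ramification do not exist in general, so a better choice of $\sigma$ will not repair this.

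The missing idea is to use $\widetilde S$ only to produce the sections, and then to descend back to $S$ before invoking Theorem~\ref{thm_VZ_Arakelov}. The paper forms the $mklr$-fold fiber products over both $S_1$ and $\widetilde S_1$, and arranges that the resolved one over $\widetilde S_1$ dominates the one over $S_1$. It then uses Lemma~\ref{lem_Viehweg_inclusion}, together with $\widetilde\Delta^{(mklr)}_1\le(\tau^{(mklr)})^{-1}\Delta^{(mklr)}_1+(\sigma\circ\widetilde f^{(mklr)})^{-1}D$, to embed $\sigma_\ast\widetilde f^{(mklr)}_{1\ast}(\cdots)$ into $\sigma_\ast\sigma^\ast f^{(mklr)}_{1\ast}\mathcal{O}\big(kK+k\Delta^{(mklr)}+k(f^{(mklr)})^{-1}D\big)$. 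The trace of $\sigma$ brings everything down to $S$, and the composite is nonzero because over $S^o$ it is $\deg\sigma$ times the natural inclusion. Theorem~\ref{thm_VZ_Arakelov} is then applied on $S$ itself, so only $K_S+D$ appears on the right. The price is the twist by $\mathcal{O}_S(-kD)$. This twist, not ``cyclic cover branching'', is the second copy of $D$ in $m\,c_1(W)-2D$; working on $\widetilde S$ as you propose would produce only one copy. Your Step 2 bookkeeping also needs replacing, since a cyclic cover of an $mlr$-fold fiber product does not raise its relative dimension $mlrn$. The correct mechanism is $(\det W)^{\otimes mk}\hookrightarrow\bigotimes^{mkl}W\hookrightarrow(\cdots)^{\otimes mklr}$, which by Lemma~\ref{lem_mild_pushforward} is a pushforward from the $mklr$-fold fiber product. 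Then $L:=(\det W)^{\otimes m}\otimes\mathcal{O}_S(-D)\otimes I_Z$ satisfies $L^{\otimes k}\to f^{(mklr)}_\ast\mathcal{O}\big(kK+k\Delta^{(mklr)}\big)$ exactly in the form \eqref{align_s}. Here $I_Z$ absorbs the codimension-two locus where the construction is not defined.
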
  
	\begin{proof}		
		\emph{Step 1: Strictly semistable reduction in codimension one.}  
		After possibly performing blowups with smooth centers on $S$, we may assume that $D_f$ is a reduced simple normal crossing divisor ($D_f = D$ in this case). Subsequently, by a finite sequence of smooth blowups on $X$, we may further arrange that $f \colon (X,\Delta) \to S$ is semistable in codimension one.  
		
		Let  
		\begin{align*}  
			\xymatrix{  
				(\widetilde{X},\widetilde{\Delta}) \ar[d]_{\widetilde{f}} \ar[r]^{\tau} & (X,\Delta) \ar[d]^{f} \\  
				\widetilde{S} \ar[r]^{\sigma} & S  
			}  
		\end{align*}  
		be a strictly semistable reduction of $f$ in codimension one, as provided by Proposition \ref{prop_ssr_cod1}.  
		
		Let $Z \subset S$ be a closed algebraic subset of codimension at least two such that:  
		\begin{itemize}  
			\item $\sigma$ is finite and flat over $S \setminus Z$;  
			\item the restriction $f_1 := f|_{X_1} \colon (X_1 := f^{-1}(S \setminus Z),\, \Delta_1 := \Delta|_{X_1}) \to S_1 := S \setminus Z$ is semistable;  
			\item the restriction $\widetilde{f}_1 := \widetilde{f}|_{\widetilde{X}_1} \colon (\widetilde{X}_1 := \widetilde{f}^{-1}(\widetilde{S} \setminus \sigma^{-1}(Z)),\, \widetilde{\Delta}_1 := \widetilde{\Delta}|_{\widetilde{X}_1}) \to \widetilde{S}_1 := \widetilde{S} \setminus \sigma^{-1}(Z)$ is strictly semistable.  
		\end{itemize}  
		
		Let $S^\circ := S \setminus D_f$, $X^\circ := f^{-1}(S^\circ)$, $f^\circ := f|_{X^\circ}$, and $l := \mathrm{rank}(W)$. Let $m > 0$ be an integer.  		
		Using the notation introduced in \S\ref{section_fiberprod}, denote by $(\widetilde{X}^{[mklr]}_{1,\widetilde{S}_1}, \widetilde{\Delta}^{[mklr]}_{1,\widetilde{S}_1})$ the $mklr$-fold fiber product of $(\widetilde{X}_1,\widetilde{\Delta}_1)$ over $\widetilde{S}_1$. Let  
		\[
		(\widetilde{X}^{(mklr)}_1, \widetilde{\Delta}^{(mklr)}_1) \to (\widetilde{X}^{[mklr]}_{1,\widetilde{S}_1}, \widetilde{\Delta}^{[mklr]}_{1,\widetilde{S}_1})  
		\]  
		be a functorial semi-log canonical desingularization biholomorphic over the smooth locus $\widetilde{X}^{o,[mklr]}_{\widetilde{S}^\circ}$.  
		
		Denote the induced structure morphism by  
		\[
		f^{(mklr)}_1 \colon (\widetilde{X}^{(mklr)}_1, \widetilde{\Delta}^{(mklr)}_1) \to \widetilde{S}_1.  
		\]   
		
		Let $(X^{[mklr]}_{1,S_1}, \Delta^{[mklr]}_{1,S_1})$ denote the $mklr$-fold fiber product of $(X_1,\Delta_1)$ over $S_1$, where  
		\[
		\Delta^{[mklr]}_1 := \sum_{i=1}^{mklr} p_i^\ast \Delta_1,  
		\]  
		and $p_i \colon X^{[mklr]}_{1,S_1} \to X_1$ is the projection onto the $i$-th factor.  
		
		Let  
		\[
		\widetilde{\pi} \colon (X^{(mklr)}_1, \Delta^{(mklr)}_1) \to (X^{[mklr]}_{1,S_1}, \Delta^{[mklr]}_{1,S_1})  
		\]  
		be a functorial semi-log canonical desingularization biholomorphic over $X^{o,[mklr]}_{S^o}$. Here, $\Delta^{(mklr)}_1$ is the reduced simple normal crossing divisor on $X^{(mklr)}_1$ uniquely determined by the adjunction formula  
		\[
		\widetilde{\pi}^\ast\Big(\sum_{i=1}^{mklr} p_i^\ast\big(K_{X_1/S_1} + \Delta_1\big)\Big) = K_{X^{(mklr)}_1/S_1} + \Delta^{(mklr)}_1 - E,  
		\]  
		where $E$ is an effective $\widetilde{\pi}$-exceptional $\mathbb{Q}$-divisor whose support contains no irreducible component of $\Delta^{(mklr)}_1$.  		
		Denote the induced structure morphism by  
		\[
		f^{(mklr)}_1 \colon (X^{(mklr)}_1, \Delta^{(mklr)}_1) \to S_1.  
		\] 
		
		By performing blow-ups of $\widetilde{X}^{(mklr)}_1$ along centers supported over $\widetilde{S} \setminus \widetilde{S}^o$ we may assume the existence of a commutative diagram  
		\begin{align*}  
			\xymatrix{  
				(\widetilde{X}^{(mklr)}_1,\widetilde{\Delta}^{(mklr)}_1)\ar[r]^{\tau^{(mklr)}}\ar[d]^{\tilde{f}^{(mklr)}_1} & (X^{(mklr)}_1,\Delta_1^{(mklr)})\ar[d]^{f^{(mklr)}_1}\\  
				\widetilde{S}_1 \ar[r]^\sigma& S_1  
			},  
		\end{align*}  
		in which $\tau^{(mklr)}$ is a generically finite projective morphism.  
		
		By construction, one has  
		$$\widetilde{\Delta}^{(mklr)}|_{(\sigma \circ \widetilde{f}^{(mklr)})^{-1}(S^o)} = (\tau^{(mklr)})^{-1}(\Delta^{(mklr)})|_{(\sigma \circ \widetilde{f}^{(mklr)})^{-1}(S^o)}.$$  
		Since the pair $(\widetilde{X}^{[mklr]}_{1,\widetilde{S}_1}, \widetilde{\Delta}^{[mklr]}_{1,\widetilde{S}_1})$ has only semi-log canonical singularities, it follows that  
		$$\widetilde{\Delta}^{(mklr)}_1 \leq (\tau^{(mklr)})^{-1}(\Delta^{(mklr)}_1) + (\sigma \circ \widetilde{f}^{(mklr)})^{-1}(D).$$  
		Therefore, by Lemma \ref{lem_Viehweg_inclusion}, there is an inclusion  
		\begin{align}\label{align_pullback_pushforward}  
			&\sigma_\ast \widetilde{f}^{(mklr)}_{1\ast}\left(\mathcal{O}_{\widetilde{X}_1^{(mklr)}}\big(kK_{\widetilde{X}_1^{(mklr)}/\widetilde{S}_1} + k\widetilde{\Delta}_1^{(mklr)}\big)\right) \\  
			\nonumber  
			\subset\ &\sigma_\ast \sigma^\ast f^{(mklr)}_{1\ast}\left(\mathcal{O}_{X_1^{(mklr)}}\big(kK_{X_1^{(mklr)}/S_1} + k\Delta^{(mklr)} + (f^{(mklr)})^{-1}(kD)\big)\right).  
		\end{align}  
		Moreover, Lemma \ref{lem_functorial_pushforward_pluricanonical} yields an injective morphism  
		\begin{align*}  
			\left(f_{1\ast}\mathcal{O}_{X_1}\big(kK_{X_1/S_1} + \lfloor k\Delta - k f^{-1}R_f \rfloor\big)\right)^{\otimes mklr}  
			\hookrightarrow \sigma_\ast\left( \widetilde{f}_{1\ast}\left(\mathcal{O}_{\widetilde{X}_1}\big(kK_{\widetilde{X}_1/\widetilde{S}_1} + k\widetilde{\Delta}\big)\right)^{\otimes mklr}\right).  
		\end{align*}  
		Combining this with Lemma \ref{lem_mild_pushforward} and (\ref{align_pullback_pushforward}), we obtain an injective morphism  
		\begin{align}\label{align_main_proof_inclusion} 
			&\left(f_{1\ast}\mathcal{O}_{X_1}\big(kK_{X_1/S_1} + \lfloor k\Delta - k f^{-1}R_f \rfloor\big)\right)^{\otimes mklr} \\  
			\nonumber  
			\hookrightarrow\ &\sigma_\ast \sigma^\ast f^{(mklr)}_{1\ast}\left(\mathcal{O}_{X_1^{(mklr)}}\big(kK_{X_1^{(mklr)}/S_1} + k\Delta^{(mklr)} + (f^{(mklr)})^{-1}(kD)\big)\right) \\  
			\nonumber  
			\stackrel{\mathrm{tr}}{\longrightarrow}\ &f^{(mklr)}_{1\ast}\left(\mathcal{O}_{X_1^{(mklr)}}\big(kK_{X_1^{(mklr)}/S_1} + k\Delta^{(mklr)} + (f^{(mklr)})^{-1}(kD)\big)\right),  
		\end{align}  
		where $\mathrm{tr}$ denotes the natural trace map associated with the finite morphism $\sigma$. 
		
		\emph{Step 2: Proof of the inequalities.} 
	    Without loss of generality, we may assume that both $W$ and  
	    $$f_\ast\left(\mathcal{O}_{X}\big(kK_{X/S} + \lfloor k\Delta - k f^{-1}R_f \rfloor\big)\right)$$  
	    are locally free sheaves on $S_1$.  
	    The inclusion  
	    $$W|_{S_1} \hookrightarrow f_\ast\left(\mathcal{O}_{X}\big(kK_{X/S} + \lfloor k\Delta - k f^{-1}R_f \rfloor\big)\right)^{\otimes r}|_{S_1}$$  
	    induces an injective morphism  
	    \begin{align}\label{align_detW_inj}  
	    	(\det W)^{\otimes k}|_{S_1} \hookrightarrow \bigotimes^{kl} W|_{S_1} \hookrightarrow f_{1\ast}\left(\mathcal{O}_{X_1}\big(kK_{X_1/S_1} + \lfloor k\Delta - k f^{-1}R_f \rfloor\big)\right)^{\otimes klr}.  
	    \end{align}  
	    In summary, we obtain a chain of morphisms  
	    \begin{align*}  
	    	(\det W)^{\otimes mk}|_{S_1} &\hookrightarrow f_{1\ast}\left(\mathcal{O}_{X_1}\big(kK_{X_1/S_1} + \lfloor k\Delta - k f^{-1}R_f \rfloor\big)\right)^{\otimes mklr} \quad \text{(\ref{align_detW_inj})} \\  
	    	&\hookrightarrow f^{(mklr)}_{1\ast}\left(\mathcal{O}_{X_1^{(mklr)}}\big(kK_{X_1^{(mklr)}/S_1} + k\Delta^{(mklr)} + (f^{(mklr)})^{-1}(kD)\big)\right) \quad \text{(\ref{align_main_proof_inclusion})}.  
	    \end{align*}  
	    The composition is nonzero: its restriction to the open subset $S^o \subset S_1$ coincides with the composition  
	    $$(\det W)^{\otimes mk}|_{S^o} \hookrightarrow f^o_\ast\left(\mathcal{O}_{X^o}\big(kK_{X^o/S^o} + k\Delta^o\big)\right)^{\otimes mklr} \simeq f^{[mklr]}_\ast\left(\mathcal{O}_{X^{o[mklr]}}\big(kK_{X^{o[mklr]}/S^o} + k\Delta^{o[mklr]}\big)\right),$$  
	    which is injective. Consequently, after twisting by $\mathcal{O}_S(-kD)$ and an ideal sheaf $I_Z$ whose cosupport is contained in $Z$, we obtain an injective morphism  
	    \begin{align}  
	    	(\det W)^{\otimes mk} \otimes \mathcal{O}_S(-kD) \otimes I_Z^{\otimes k} \hookrightarrow f^{(mklr)}_\ast\left(\mathcal{O}_{X^{(mklr)}}\big(kK_{X^{(mklr)}/S} + k\Delta^{(mklr)}\big)\right).  
	    \end{align}  
		Applying Theorem \ref{thm_VZ_Arakelov} to the morphism $(X^{(mklr)}, \Delta^{(mklr)}) \to S$, which is a simple normal crossing family over $S \setminus D_f$ of relative dimension $m k l r n$, and to the torsion-free sheaf $(\det W)^{\otimes m} \otimes \mathcal{O}_S(-D) \otimes I_Z$, we obtain the inequality  
		\begin{align}  
			\big(m c_1(W) - 2D\big) \cdot \alpha \leq \frac{1}{m k l r n + 1} \sum_{p=1}^{m k l r n} p \, d^{p-1} \, (K_S + D) \cdot \alpha  
		\end{align}  
		for every movable curve class $\alpha \in N_1(S)$ and every positive integer $m$.  
		
		If, in addition, $K_S + D$ is ample, Theorem \ref{thm_VZ_Arakelov} yields  
		\begin{align}  
			\big(m c_1(W) - 2D\big) \cdot (K_S + D)^{d-1} \leq \frac{m k l r n}{d} \, (K_S + D)^d.  
		\end{align}  
		Rearranging terms gives  
		\begin{align}  
			c_1(W) \cdot (K_S + D)^{d-1} \leq \frac{k l r n}{d} \, (K_S + D)^d + \frac{2}{m} \, D \cdot (K_S + D)^{d-1}.  
		\end{align}  
		Letting $m \to +\infty$, the second term on the right-hand side vanishes, and we conclude  
		\begin{align}  
			c_1(W) \cdot (K_S + D)^{d-1} \leq \frac{k l r n}{d} \, (K_S + D)^d.  
		\end{align}
	\end{proof}
\begin{cor}\label{cor_Arakelov_ineq}
	Let $f \colon (X,\Delta) \to S$ be a morphism and let $R_f \subset D \subset D_f \subset S$ be the associated divisors as in Theorem \ref{thm_Arakelov_family}. Let  
	$$W \subset \left(f_\ast\left(\mathcal{O}_X\big(kK_{X/S} + \lfloor k\Delta \rfloor\big)\right)\right)^{\otimes r}$$  
	be a coherent subsheaf, for some integers $k,r \geq 1$, with $\operatorname{rank}(W) = l$. If $K_S + D$ is pseudo-effective, then for every movable curve class $\alpha \in N_1(S)$ and every positive integer $m$, one has  
	\begin{align}\label{align_Arakelov_cor_1}  
		c_1(W) \cdot \alpha \leq \frac{1}{m(m k l r n + 1)} \sum_{p=1}^{m k l r n} p \, d^{p-1} \, (K_S + D) \cdot \alpha + \frac{2}{m} \, D \cdot \alpha + r \, \lceil k R_f \rceil \cdot \alpha.  
	\end{align}  
	
	Moreover, the following refined estimates hold:  
	\begin{itemize}  
		\item If $\dim S = 1$, then  
		\begin{align}\label{align_Arakelov_cor_2}  
			\deg W \leq \frac{k l r n}{2} \, \deg(K_S + D) + r \, \deg \lceil k R_f \rceil.  
		\end{align}  
		\item If $(S,D_f)$ is log smooth (so that $D = D_f$) and $K_S + D$ is ample, then  
		\begin{align}\label{align_Arakelov_cor_3}  
			c_1(W) \cdot (K_S + D)^{d-1} \leq \frac{k l r n}{d} \, (K_S + D)^d + r \, \lceil k R_f \rceil \cdot (K_S + D)^{d-1}.  
		\end{align}  
	\end{itemize}
\end{cor}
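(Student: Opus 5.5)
The plan is to reduce Corollary \ref{cor_Arakelov_ineq} to Theorem \ref{thm_Arakelov_family} by twisting $W$ by a multiple of $\lceil kR_f\rceil$. The key identity is
\[
\lfloor k\Delta - kf^\ast R_f\rfloor \geq \lfloor k\Delta\rfloor - f^\ast\lceil kR_f\rceil .
\]
It holds because $f^\ast\lceil kR_f\rceil$ is an integral divisor dominating $kf^\ast R_f$; on the relevant open set $U$ it should be read via $f^{-1}$, as in Definition \ref{defn_ramified_divisor}. Combining this with the projection formula, I will produce an inclusion
\[
f_\ast\mathcal{O}_X(kK_{X/S}+\lfloor k\Delta\rfloor)\otimes\mathcal{O}_S(-\lceil kR_f\rceil)\hookrightarrow f_\ast\mathcal{O}_X\big(kK_{X/S}+\lfloor k\Delta-kf^\ast R_f\rfloor\big).
\]
This inclusion is taken over the big open set $S_1$ where $f$ is semistable and all sheaves are locally free. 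A subtlety is that $\lfloor k\Delta\rfloor$ versus $k\Delta$: Theorem \ref{thm_Arakelov_family} requires $k\Delta$ integral, whereas here only $\lfloor k\Delta\rfloor$ appears. I expect to handle this by checking that the proof of Theorem \ref{thm_Arakelov_family} only uses the rounded-down divisor through Lemma \ref{lem_functorial_pushforward_pluricanonical}, which is already stated with a round-down. As a fallback, replace $\Delta$ by the smaller boundary $\lfloor k\Delta\rfloor/k$. This keeps the pair snc slc, and its vertical part can only decrease, so its ramification divisor is $\leq R_f$.

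Taking $r$-th tensor powers and reflexive hulls gives
\[
W':=\big(W\otimes\mathcal{O}_S(-r\lceil kR_f\rceil)\big)^{\vee\vee}\subset \Big(f_\ast\mathcal{O}_X\big(kK_{X/S}+\lfloor k\Delta-kf^\ast R_f\rfloor\big)^{\otimes r}\Big)^{\vee\vee}.
\]
This is an inclusion of reflexive sheaves agreeing off codimension two, so it extends over all of $S$. The sheaf $W'$ has rank $l$ and $c_1(W')=c_1(W)-lr\lceil kR_f\rceil$. Applying Theorem \ref{thm_Arakelov_family} to $W'$ gives (\ref{align_Arakelov_cor_1}) and (\ref{align_Arakelov_cor_3}), but with correction term $lr\lceil kR_f\rceil$ rather than $r\lceil kR_f\rceil$. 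I expect this rank factor to be the main obstacle.

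To remove the factor $l$, I would not twist $W$ itself. Instead I would revisit Step 2 of the proof of Theorem \ref{thm_Arakelov_family}. There $(\det W)^{\otimes k}$ embeds into $\bigotimes^{kl}W$, and the twist by $R_f$ enters only through the morphism into the fiber product. The fiber product term actually produced by Lemma \ref{lem_functorial_pushforward_pluricanonical} lives on $\widetilde{X}$, where the pulled-back twist can be absorbed once in the final divisor. So the effective correction to the rank-one sheaf $L$ fed into Theorem \ref{thm_VZ_Arakelov} is $m\,r\lceil kR_f\rceil$ per factor of $\det W^{\otimes m}$. After dividing by $m$, this gives exactly $r\lceil kR_f\rceil\cdot\alpha$. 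If this refinement fails, I would accept the bound with $l$ and recheck the normalization in the statement.

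The curve case (\ref{align_Arakelov_cor_2}) uses a separate argument. For $d=1$, (\ref{align_Arakelov_cor_1}) gives the sum $\sum_{p\le N}p\cdot 1^{p-1}=N(N+1)/2$ with $N=mklrn$. The first term is then $\frac{N}{2m}\deg(K_S+D)=\frac{klrn}{2}\deg(K_S+D)$. Letting $m\to\infty$ kills $\frac{2}{m}\deg D$ and yields (\ref{align_Arakelov_cor_2}). In this case $D$ is automatically snc, and the movable class is the point class.
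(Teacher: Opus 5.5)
Your reduction step is the paper's proof: twist $W$ by $\mathcal{O}_S(-r\lceil kR_f\rceil)$, use $\lfloor k\Delta-kf^{-1}R_f\rfloor\ge\lfloor k\Delta\rfloor-f^\ast\lceil kR_f\rceil$ with the projection formula, apply Theorem \ref{thm_Arakelov_family}, and let $m\to\infty$ when $d=1$. Your bookkeeping here is correct and more careful than the paper's. Since $\operatorname{rank}W=l$, one has $c_1\big(W\otimes\mathcal{O}_S(-r\lceil kR_f\rceil)\big)=c_1(W)-lr\lceil kR_f\rceil$. So this argument proves (\ref{align_Arakelov_cor_1})--(\ref{align_Arakelov_cor_3}) only with correction term $lr\lceil kR_f\rceil$; the paper's one-line proof silently drops the factor $l$. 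Your replacement of $\Delta$ by $\lfloor k\Delta\rfloor/k$, to meet the integrality hypothesis, is sound and covers a point the paper skips: the new ramification divisor is $\le R_f$, and movable classes are nonnegative on effective divisors.

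The genuine gap is your refinement that is supposed to remove $l$, and it cannot be repaired.

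\emph{Why the refinement fails.} In Step 2, $(\det W)^{\otimes mk}$ is sent into the $mklr$-fold tensor power of $f_\ast\mathcal{O}_X(kK_{X/S}+\lfloor k\Delta\rfloor)$. Lemma \ref{lem_functorial_pushforward_pluricanonical} then has to be applied to every tensor factor, so one copy of $\lceil kR_f\rceil$ is lost per factor. The total twist is $\mathcal{O}_S(-mklr\lceil kR_f\rceil)$, not a single copy. Hence the rank-one sheaf fed into Theorem \ref{thm_VZ_Arakelov} is $(\det W)^{\otimes m}\otimes\mathcal{O}_S(-mlr\lceil kR_f\rceil-D)\otimes I_Z$, and dividing by $m$ gives back $lr\lceil kR_f\rceil$.

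\emph{The stated inequality is false.} Take the following data.
\begin{itemize}
\item $G=(\mathbb{Z}/2)^2$ acts on $\widetilde S=\mathbb{P}^1$ with quotient $S=\mathbb{P}^1$, branched at three points.
\item $G$ acts freely on a genus-$5$ curve $F$, for instance an \'etale $G$-cover of a genus-$2$ curve.
\item $X=(F\times\widetilde S)/G$ and $\Delta=0$.
\end{itemize}
Then:
\begin{itemize}
\item $X$ is smooth, $D$ consists of the three branch points, and the fibres over them are $2F'$ with $g(F')=3$.
\item Consequently $R_f=\tfrac12 D$ and $\deg(K_S+D)=1$.
\item The pullback of $\omega_{X/S}$ to $F\times\widetilde S$ is $\omega_F\boxtimes\mathcal{O}(R_\sigma)$ with $\deg R_\sigma=6$, so $K_{X/S}^2=2\cdot 8\cdot 6/4=24$.
\item $\deg f_\ast\omega_{X/S}=\chi(\mathcal{O}_X)+g(F)-1=-1+4=3$.
\end{itemize}
Now take $k=2$, $r=1$ and $W=f_\ast\omega_{X/S}^{\otimes 2}$, so $l=12$ and $n=1$. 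Riemann--Roch gives $\deg W=K_{X/S}^2+\deg f_\ast\omega_{X/S}=27$. But (\ref{align_Arakelov_cor_2}) asserts $\deg W\le 12+3=15$, and (\ref{align_Arakelov_cor_1}) asserts $\deg W\le 15+6/m$. The $lr$-version instead gives the bound $12+36=48$.

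So the correct outcome is your fallback. What your argument, and the paper's, actually proves is the corollary with $lr\lceil kR_f\rceil$ in place of $r\lceil kR_f\rceil$, in all three inequalities.
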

\begin{proof}
	Notice that the condition of $W$ implies that there is an injective morphism
	$$W\otimes\sO_S(-r\lceil kR_f\rceil)\to \left(f_\ast(\sO_X(kK_{X/S}+\lfloor k\Delta-kf^{-1}R_f\rfloor))^{\otimes r}\right)^{\vee\vee}.$$
	Applying Theorem \ref{thm_Arakelov_family} to $W\otimes\sO_S(-r\lceil kR_f\rceil)$ we obtain the inequality (\ref{align_Arakelov_cor_1}) and (\ref{align_Arakelov_cor_3}).
	When $d=\dim S=1$, by taking $m\to +\infty$ in (\ref{align_Arakelov_cor_1}) we obtain (\ref{align_Arakelov_cor_2}).
\end{proof}

    \section{Application: volume inequality}
    Let $X$ be a smooth projective variety and $L \in \operatorname{Pic}(X)$. The Iitaka volume of $L$ is defined by  
    \begin{align}\label{align_Iitaka_volume}  
    	\operatorname{Ivol}(L) := \limsup_{k \to \infty} \frac{\kappa(L)! \, h^0\big(X, L^{\otimes k}\big)}{k^{\kappa(L)}}.  
    \end{align}  
    
    \begin{thm}  
    	Let $f \colon X \to S$ be a morphism from a smooth projective variety $X$ to a smooth projective curve $S$, with geometric generic fiber $F$ of dimension $n$ admitting a good minimal model. Let $D \subset S$ be a reduced effective divisor such that $f$ restricts to a smooth family over $S \setminus D$, and let $R_f \subset D$ denote the ramification divisor of $f$. Let $L$ be a nef line bundle on $S$. If $K_S + D$ is nef, then  
    	\begin{align*}  
    		\operatorname{Ivol}\big(\omega_{X/S} \otimes f^\ast L\big)  
    		\leq \operatorname{Ivol}(\omega_F) \cdot \big(\kappa(\omega_F) + 1\big) \cdot \left( \frac{n}{2} \deg(K_S + D) + \deg L \right)  
    		+ \limsup_{k \to \infty} \frac{\big(\kappa(\omega_F) + 1\big)!}{k^{\kappa(\omega_F) + 1}} \deg \lceil k R_f \rceil.  
    	\end{align*}  
    \end{thm}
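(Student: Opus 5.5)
We reduce the Iitaka-volume estimate to the curve case (\ref{align_Arakelov_cor_2}) of Corollary \ref{cor_Arakelov_ineq} with $r=1$, following Lu--Tan--Zuo. Put $\kappa:=\kappa(\omega_F)$. Since $F$ has a good minimal model, $\kappa(\omega_{X/S}\otimes f^\ast L)\le\kappa+1$. If the inequality is strict, the left side is $0$ and there is nothing to prove, so we may assume $\kappa(\omega_{X/S}\otimes f^\ast L)=\kappa+1$. Then
\[
\mathrm{Ivol}(\omega_{X/S}\otimes f^\ast L)=\limsup_{k\to\infty}\frac{(\kappa+1)!}{k^{\kappa+1}}\,h^0\bigl(S,\,f_\ast\omega_{X/S}^{\otimes k}\otimes L^{\otimes k}\bigr).
\]
Set $\sF_k:=f_\ast\omega_{X/S}^{\otimes k}$. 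This is a vector bundle on the curve $S$ of rank $l_k=h^0(F,kK_F)$. The good minimal model of $F$ gives $l_k=\mathrm{Ivol}(\omega_F)\,k^{\kappa}/\kappa!+o(k^\kappa)$ along suitable divisible $k$.

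\textbf{Step 1: bounding $h^0$ by slopes.} Take the Harder--Narasimhan filtration $0=\sF^0\subset\sF^1\subset\cdots\subset\sF^t=\sF_k$ with slopes $\mu_1>\cdots>\mu_t$. For a semistable bundle $G$ on $S$, a standard Riemann--Roch/Clifford-type estimate gives
\[
h^0(G)\le \operatorname{rk}G\cdot\max\bigl(0,\mu(G)+1\bigr),
\]
up to a constant depending only on the genus. Summing over the graded pieces of $\sF_k\otimes L^{k}$ yields
\[
h^0(\sF_k\otimes L^k)\le \sum_i \operatorname{rk}(\sF^i/\sF^{i-1})\,\max\bigl(0,\mu_i+k\deg L+1\bigr).
\]
The key observation is that every positive-slope piece can be controlled by applying Corollary \ref{cor_Arakelov_ineq} to the subsheaf $W=\sF^i$. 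Since $\sF^i$ has the maximal slope among subsheaves of its rank range, the bound on $\deg \sF^i$ controls $\mu_1$. Explicitly, applying the corollary to the maximal destabilizing subsheaf with $l=\operatorname{rk}\sF^1$, and using $l\le l_k$, gives
\[
\mu_{\max}(\sF_k)\le \frac{kn}{2}\deg(K_S+D)+\frac{\deg\lceil kR_f\rceil}{l}.
\]
The factor $l$ is essential here: with $l=\operatorname{rk}\sF^1$, the first term becomes independent of $l$.

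\textbf{Step 2: assembling the bound.} Bounding every piece by $\mu_{\max}$ gives
\[
h^0(\sF_k\otimes L^k)\lesssim l_k\Bigl(\tfrac{kn}{2}\deg(K_S+D)+k\deg L\Bigr)+\frac{l_k}{\operatorname{rk}\sF^1}\deg\lceil kR_f\rceil+O(l_k).
\]
Multiplying by $(\kappa+1)!/k^{\kappa+1}$ and substituting the asymptotics of $l_k$ produces the main term
\[
\mathrm{Ivol}(\omega_F)(\kappa+1)\Bigl(\tfrac n2\deg(K_S+D)+\deg L\Bigr).
\]
The $O(l_k)$ error contributes $O(k^{\kappa})$, which is negligible.

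\textbf{Main obstacle: the ramification term.} The bound above carries a factor $l_k/\operatorname{rk}\sF^1$, but the statement has none. To get rid of it, I would avoid the crude estimate and instead sum more carefully over the filtration. Writing $\deg(\sF^i)$ and using Corollary \ref{cor_Arakelov_ineq} for each $\sF^i$ (with $r=1$) gives
\[
\deg\sF^i\le \operatorname{rk}\sF^i\,\tfrac{kn}{2}\deg(K_S+D)+\deg\lceil kR_f\rceil .
\]
Abel summation then bounds $\sum_i \operatorname{rk}(\sF^i/\sF^{i-1})\max(0,\mu_i+c)$ by $\max_i\bigl(\deg\sF^i+c\operatorname{rk}\sF^i\bigr)$, where $c=k\deg L+1$. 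This is at most
\[
l_k\Bigl(\tfrac{kn}{2}\deg(K_S+D)+k\deg L+1\Bigr)+\deg\lceil kR_f\rceil .
\]
This is exactly the required shape: the ramification term appears once with coefficient $(\kappa+1)!/k^{\kappa+1}$, and no rank factor. Justifying this summation rigorously, including the handling of negative-slope pieces and the $\limsup$ along non-divisible $k$, is where I expect the real work to lie.
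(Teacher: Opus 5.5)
Your argument is correct in its core, and it takes a genuinely different route from the paper.

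\textbf{The paper's route.} The paper first proves $H^1(S,f_\ast\omega_{X/S}^{\otimes k}\otimes L^{\otimes k})=0$ for $k\gg0$. This uses Kawamata's ampleness of $f_\ast\omega^{\otimes k}_{X/S}$ for non-isotrivial $f$ and finite generation of the canonical ring of $F$. Riemann--Roch then computes $h^0$ exactly as $\deg\sF_k+l_k(k\deg L+1-g)$, and the paper applies (\ref{align_Arakelov_cor_2}) once, to $W=\sF_k$.

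\textbf{Your route.} You only need an upper bound for $h^0$. You use the Harder--Narasimhan filtration with the estimate $h^0(G)\le\operatorname{rk}G\cdot\max(0,\mu(G)+1)$ for semistable $G$. No genus constant is needed: for $\mu\ge0$, twist down by an effective divisor of degree $\lfloor\mu\rfloor+1$ and use semistability. You then apply (\ref{align_Arakelov_cor_2}) to every piece $\sF^j$, which the corollary allows since it holds for arbitrary subsheaves.

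\textbf{The step you flag is immediate.} Let $j_0$ be the last index with $\mu_{j_0}+c>0$. Then $\sum_i r_i\max(0,\mu_i+c)=\deg\sF^{j_0}+c\operatorname{rk}\sF^{j_0}$ exactly. Since $\frac{kn}{2}\deg(K_S+D)+c>0$, you get $h^0(\sF_k\otimes L^{\otimes k})\le l_k\bigl(\frac{kn}{2}\deg(K_S+D)+k\deg L+1\bigr)+\deg\lceil kR_f\rceil$ for every $k$. No asymptotics of $l_k$ along divisible $k$ are needed either. Subadditivity of $\limsup$, the definition of $\mathrm{Ivol}(\omega_F)$, and $l_k=O(k^{\kappa})$ suffice. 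So the crude Step 2 can be dropped.

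\textbf{What each route buys.} Yours needs no vanishing theorem, no ampleness and no finite generation. Its only cost is $+1$ in place of $+1-g$, which disappears in the limit.

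\textbf{The initial reduction is wrong.} With the definition (\ref{align_Iitaka_volume}), $\mathrm{Ivol}$ is normalized by the Kodaira dimension of the bundle itself. So if $\kappa(\omega_{X/S}\otimes f^\ast L)<\kappa(\omega_F)+1$, the left-hand side is \emph{not} $0$; it is positive as soon as that Kodaira dimension is $\ge0$. You need the equality $\kappa(\omega_{X/S}\otimes f^\ast L)=\kappa(\omega_F)+1$. Easy addition alone gives only $\le$. The paper obtains equality for non-isotrivial $f$ from \cite{Kawamata1985} and \cite[Lemma 2.3.31]{Fujino2020}. In your argument, this is the only place where the good-minimal-model hypothesis is really used.

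When the Kodaira dimension drops, the inequality as stated can genuinely fail. Take $X=F\times S$ with $S$ elliptic, $D=0$ and $L=\sO_S$: the left side is $\mathrm{Ivol}(\omega_F)>0$ and the right side is $0$. So the paper's remark that the isotrivial case is trivial has the same defect.

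What your argument proves unconditionally is the bound for $\limsup_k (\kappa+1)!\,h^0(\omega_{X/S}^{\otimes k}\otimes f^\ast L^{\otimes k})/k^{\kappa+1}$. This equals $\mathrm{Ivol}(\omega_{X/S}\otimes f^\ast L)$ exactly when $\kappa(\omega_{X/S}\otimes f^\ast L)=\kappa+1$.
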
  
    \begin{proof}
    	We adapt the argument of \cite{LTZ2017}.  
    	The statement holds trivially when $f$ is isotrivial; hence, we assume $f$ is non-isotrivial. Let $g$ denote the genus of $S$, and let $s = \#D$. By \cite[Theorem 1.1]{Kawamata1985}, the direct image sheaf $f_\ast(\omega_{X/S}^{\otimes k})$ is ample for all integers $k \gg 1$. Since the geometric generic fiber $F$ admits a good minimal model, its canonical ring $\bigoplus_{k \geq 0} H^0(F, \omega_F^{\otimes k})$ is finitely generated (see also \cite{HM2010}). Consequently, there exists an integer $k_0 > 0$ such that, for all $k \geq k_0$ and all integers $l > 1$, the multiplication map  
    	$$\operatorname{Sym}^l f_\ast(\omega_{X/S}^{\otimes k}) \to f_\ast(\omega_{X/S}^{\otimes kl})$$  
    	is surjective over a dense Zariski open subset of $S$.  
    	
    	It follows that, for sufficiently large $l > 1$, there exists an ample line bundle $H$ on $S$ with $\deg H > 2g - 1$ and a morphism  
    	$$\bigoplus^{N} H \to f_\ast(\omega_{X/S}^{\otimes kl}),$$  
    	surjective over a dense Zariski open subset of $S$, for some $N \in \mathbb{N}$. This surjectivity implies the vanishing  
    	\begin{align*}  
    		H^1\big(S,\, f_\ast(\omega_{X/S}^{\otimes k}) \otimes L^{\otimes k}\big) = 0  
    	\end{align*}  
    	for all $k \gg 0$. 
    	
    	By the Riemann-Roch theorem for vector bundles on the curve $S$, we have  
    	\begin{align*}  
    		h^0\big(X,\, \omega_{X/S}^{\otimes k} \otimes f^\ast L^{\otimes k}\big)  
    		&= h^0\big(S,\, f_\ast(\omega_{X/S}^{\otimes k}) \otimes L^{\otimes k}\big) \\  
    		&= \deg f_\ast(\omega_{X/S}^{\otimes k}) + \operatorname{rank} f_\ast(\omega_{X/S}^{\otimes k}) \cdot \big(k \deg L + 1 - g\big).  
    	\end{align*}  
    	Applying inequality (\ref{align_Arakelov_cor_2}) to the morphism $f \colon X \to S$ and the subsheaf $W = f_\ast(\omega_{X/S}^{\otimes k})$ yields  
    	\begin{align*}  
    		\deg f_\ast(\omega_{X/S}^{\otimes k})  
    		\leq \frac{k n \cdot \operatorname{rank} f_\ast(\omega_{X/S}^{\otimes k})}{2} \, (2g - 2 + s) + \deg \lceil k R_f \rceil.  
    	\end{align*}  
    	Substituting this bound gives  
    	\begin{align*}  
    		h^0\big(X,\, \omega_{X/S}^{\otimes k} \otimes f^\ast L^{\otimes k}\big)  
    		\leq \operatorname{rank} f_\ast(\omega_{X/S}^{\otimes k}) \cdot \left( \frac{k n}{2} (2g - 2 + s) + k \deg L + 1 - g \right) + \deg \lceil k R_f \rceil.  
    	\end{align*}  
    	By \cite[Theorem 1.1]{Kawamata1985} and \cite[Lemma 2.3.31]{Fujino2020}, the Kodaira dimension satisfies  
    	\begin{align*}  
    		\kappa\big(\omega_{X/S} \otimes f^\ast L\big) = \kappa(\omega_F) + 1.  
    	\end{align*}  
    	Hence, the Iitaka volume is bounded as follows:  
    	\begin{align*}  
    		\operatorname{Ivol}\big(\omega_{X/S} \otimes f^\ast L\big)  
    		&= \limsup_{k \to \infty} \frac{\kappa\big(\omega_{X/S} \otimes f^\ast L\big)! \cdot h^0\big(X,\, \omega_{X/S}^{\otimes k} \otimes f^\ast L^{\otimes k}\big)}{k^{\kappa(\omega_{X/S} \otimes f^\ast L)}} \\  
    		&\leq \limsup_{k \to \infty} \frac{(\kappa(\omega_F) + 1)!}{k^{\kappa(\omega_F) + 1}} \Bigg[ h^0(F,\, \omega_F^{\otimes k}) \cdot \bigg( \frac{k n}{2} (2g - 2 + s) + k \deg L + 1 - g \bigg) + \deg \lceil k R_f \rceil \Bigg] \\  
    		&= \operatorname{Ivol}(\omega_F) \cdot (\kappa(\omega_F) + 1) \cdot \bigg( \frac{n}{2} (2g - 2 + s) + \deg L \bigg) \\  
    		&\quad + \limsup_{k \to \infty} \frac{(\kappa(\omega_F) + 1)!}{k^{\kappa(\omega_F) + 1}} \Big( (1 - g) \cdot h^0(F,\, \omega_F^{\otimes k}) + \deg \lceil k R_f \rceil \Big) \\  
    		&= \operatorname{Ivol}(\omega_F) \cdot (\kappa(\omega_F) + 1) \cdot \bigg( \frac{n}{2} (2g - 2 + s) + \deg L \bigg)  
    		+ \limsup_{k \to \infty} \frac{(\kappa(\omega_F) + 1)!}{k^{\kappa(\omega_F) + 1}} \deg \lceil k R_f \rceil.  
    	\end{align*}  
    \end{proof}
	\bibliographystyle{plain}
	\bibliography{SBC}
	
\end{document}